\newcommand{\Z}{\mathbb Z}
\newcommand{\R}{\mathbb R}
\newcommand{\T}{\mathbb{T}}
\newcommand{\mPT}{\mathcal{P}(\T^d)}
\def\R{\mathbb R}
\def\Z{\mathbb Z}
\newcommand{\be}{\begin{equation}}
\newcommand{\ee}{\end{equation}}
\newcommand{\ba}{\begin{align}}
\newcommand{\ea}{\end{align}}
\def\1{{\bf 1}}
\def\inte{\int_{\T^d}}
\def\dive{{\rm div}}
\newcommand*\Laplace{\mathop{}\!\mathbin\bigtriangleup}
\newtheorem{teo}{Theorem}[section]
\newtheorem{defin}[teo]{Definition}
\newtheorem{prop}[teo]{Proposition}
\newtheorem{lemma}[teo]{Lemma}
\newtheorem{remark}[teo]{Remark}
\definecolor{mygray}{gray}{0.7}
\definecolor{Red}{cmyk}{0,1,1,0.2}
\begin{document}
	\title{Convergence of the solutions of the MFG discounted Hamilton-Jacobi equation}
	\author{Marco Masoero \thanks{PSL Research University, Universit\'e Paris-Dauphine, CEREMADE, Place de Lattre de Tassigny, F- 75016 Paris, France}}
	
	\maketitle
	
    
    \begin{abstract}
    	We consider the solution $\mathcal V_\delta$ of the discounted Hamilton-Jacobi equation in the Wasserstein space arising from potential MFG and we prove its full convergence to a corrector function $\chi_0$. We follow the structure of the proof of the analogue result in the finite dimensional setting provided by Davini, Fathi, Iturriaga, Zavidovique in 2017.  We characterize the limit $\chi_0$ through a particular set of smooth Mather measures. A major point that distinguishes the techniques deployed in the standard setting from the ones that we use here is the lack of mollification in the Wasserstain space.
    \end{abstract}

\section{Introduction}
The Mean Field Games theory (briefly MFG) is a branch of the broader field of dynamic games which is devoted to the analysis of those models where a large number of \textit{small} players interact with each others. This theory was introduced simultaneously and independently by Lasry and Lions \cite{lasry2006jeux,lasry2006jeux2} and Huang, Caines and Malhamé \cite{huang2006large}. Under appropriate assumptions, the Nash equilibria of this models can be analyzed through the solutions of the, so called, MFG system 
$$
\begin{cases} 
-\partial_t u-\Laplace u+H(x,Du)=F(x,m) & \mbox{in } \R^d\times[0,T]\\
-\partial_t m+\Laplace m+{\rm div}(mD_pH(x,Du))=0 & \mbox{in }\R^d\times[0,T]\\
m(0)=m_0,\; u(T,x)=G(x,m(T))&\mbox{in }\R^{d}
\end{cases}
$$
with unknown the couple $(u,m)$. The value $u(t,x)$ is the best that a player can get starting from $x$ at time $t$ while $m(t)$ is a probability measure that represents the distribution of players at time $t$.

In this paper we will consider a specific class of MFG which is the class of \textit{potential MFG}. When the functions $F$ and $G$ are respectively the derivatives of the potentials $\mathcal F$ and $\mathcal G$, the MFG system can be derived as optimality condition of the following minimization problem
\begin{equation}\label{intro.Ut.delta}
\mathcal U^T(t,m_{0})= \inf_{(m,\alpha)}\int_t^T\int_{\R^d}H^*\left(x,\alpha(s,x)\right)dm(s)+\mathcal F(m(s))ds+\mathcal G(m(T)),
\end{equation}
where $(m,\alpha)$ verifies the Fokker Plank equation $-\partial_{t}m+\Laplace m+{\rm div}(\alpha m)=0$ with $m(t)=m_{0}$.

Starting from \cite{lasry2006jeux2}, where this class of MFG was introduced, several papers have been focusing on this setting. The variational structure of these models often allows to push the analysis further than in the standard setting. See for instance \cite{cardaliaguet2015second,meszaros2018variational,cardaliaguet2015mean} for existence results or \cite{LavSan18,ProSan16,cardaliaguet2016first} for regularity.
 An other reason to exploit the variational structure of potential MFG is to understand how the solutions of the MFG system behave when the time horizon goes to infinity. The problem of the long time convergence has been addressed in different papers starting from \cite{lionsmean} and the Mexican wave model in \cite{gueant2011mean} to more recent contributions in \cite{cardaliaguet2013long,cardaliaguet2013long2,cardaliaguet2012long,gomes2010discrete}. In \cite{masoero2019,cardaliaguet2019weak} the authors tackled the problem using techniques from weak KAM theory. Following Fathi's seminal papers \cite{fathi1997solutions,fathi1997theoreme,fathi1998convergence} and his book \cite{fathi2008weak}, they adapted the main arguments of the weak KAM theory into the infinite dimensional framework of MFG. Note that, infinite dimensional weak KAM theorems were already known in the context of Wasserstein spaces (see \cite{gamgbo2010lagrangian,gamgbo2014weak,gomes2015minimizers,gomes2016infinite}). 

This papers is meant to answer a natural question that arises in this theory and more specifically in the context of \textit{ergodic approximation}. In \cite{cardaliaguet2019weak}, it was proved that there exists a critical value $\lambda\in\R$ such that $\mathcal U^T(0,\cdot)+\lambda T$ uniformly converges to a corrector function $\chi$ when $T\rightarrow+\infty$. We say that $\chi:\mathcal P(\T^d)\rightarrow\R$ is a corrector function if, for any $t\in\R$, $\chi$ verifies the following dynamical programming principle
\begin{equation}\label{intro.eq.defchichi.delta}
\chi(m_{0})= \inf_{(m,\alpha)}\left( \int_{0}^{t}H^{*}(x,\alpha)dm(s)+\mathcal F(m(s))ds+\chi (m(t))\right)+\lambda t,
\end{equation}
where $(m,\alpha)$ solves in the sense of distributions $-\partial_{t}m+\Delta m+{\rm div}(m\alpha)=0$ with initial condition $m(0)=m_{0}$. 
A fundamental result that was needed to get the above convergence was to know a priori that the set of corrector functions was not empty. In \cite{masoero2019} this was proven through the so called ergodic approximation.  As in \cite{lions1987homogenization}, the idea is to define the infinite horizon discounted problem 

$$
\mathcal V_{\delta}(m_{0})=\inf_{(m,\alpha)}\int_0^\infty e^{-\delta t}\int_{\T^d}H^*\left(x,\alpha(t,x)\right)dm(t)+\mathcal F(m(t))dt.
$$
 Letting $\delta\rightarrow 0$, one gets that $\delta \mathcal V_\delta(\cdot)\rightarrow-\lambda$ and that, up to subsequence, $\mathcal V_\delta(\cdot)-V_\delta(m_0)$ uniformly converges to a corrector function $\chi$.

It was proved by Davini, Fathi, Iturriaga and Zavidovique \cite{davini2016convergence} that, in the standard finite dimensional setting, the solution of the discounted equation converges to a solution of the critical Hamilton-Jacobi equation, i.e. to a corrector function. In this paper we prove the analogous result in the context of potential MFG. If we define 
\begin{align}\label{}
\bar{\mathcal V}_\delta (m)=\mathcal V_\delta(m)+\frac{\lambda}{\delta},
\end{align}
then the main result is the following. The whole family $\bar{\mathcal V}_\delta$ converges, as $\delta$ tends to zero, to a corrector function $\chi_0$. Moreover, we can characterize this limit as
\begin{align}\label{}
\chi_0(m)=\sup_{\chi\in{\mathcal S}^-}\chi(m).
\end{align}
where $\mathcal S^-$ is the set of subsolutions $\chi$ of 
\begin{equation}\label{intro.critical.HJ}
-\int_{\T^d}\dive_y D_m\chi(m,y) m(dy)+\int_{\T^d} H(y, D_m\chi(m,y))m(dy)-\mathcal F(m)=\lambda,\qquad m\in\mathcal P(\T^d)
\end{equation}
such that $\int_{{\mathcal P}(\T^d)}\chi(m)\nu(dm)$ for any $\nu\in\mathcal M_{\mathcal V}$. We say that a $\nu\in\mathcal M_{\mathcal V}$ if it is induced by an optimal trajectories for $\mathcal V_\delta$: if $(m_\delta,\alpha_\delta)$ is an optimal trajectory for $\bar{\mathcal V}_\delta(m_0)$ we define $\nu_\delta^{m_0}\in\mathcal P(\T^d\times C^1(\T^d,\R^d))$ as follows
\begin{align}
\int_{{\mathcal P}(\T^d)\times C^1(\T^d,\R^d)}f(m,\alpha)\nu_\delta^{m_0}(dm,d\alpha)=\delta\int_0^\infty e^{-\delta s}f(m^\delta(s),\alpha^\delta(s))ds.
\end{align}
If $\nu^{m_0}$ is the limit of $\nu_\delta^{m_0}$ and $m_0$ has smooth density, then $\nu^{m_0}\in\mathcal M_{\mathcal V}$. Moreover, we will also prove that if $\nu^{m_0}\in\mathcal M_{\mathcal V}$ then $\nu^{m_0}$ is a \textit{Mather measure} (Definition \ref{def.mather.measure}).

The structure of the paper itself is inspired by the one in \cite{davini2016convergence}. Even though the steps to get to the result are mostly the same, the techniques deployed to prove the main points are quite different. In \cite{davini2016convergence}, a major ingredient is that one can approximate a viscosity solution with a smooth function and this approximation is also an approximated solution of the equation. While this is quite standard in the finite dimensional setting and it is generally proved through mollification, in the space of functions over $\mathcal P(\T^d)$ one cannot expect the same. In \cite{mou2019weak} it was recently proved that one can uniformly approximate a continuous function on $\mathcal P(\T^d)$ with a sequence of smooth ones (similar approximation were also introduced in \cite{lionscollege}). The problem is that this convergence is not as strong as the one given by mollification in the finite dimensional setting. More specifically, we know that if $\chi$ is a corrector function then it is also a viscosity solution of the critical equation \eqref{intro.critical.HJ}. The result in \cite{mou2019weak} allows us to approximate $\chi$ uniformly but one cannot expect that this approximation is also an approximated solution of \eqref{intro.critical.HJ}. This lack of regularity prevented us to work at the level of solutions of the critical equation and it forced us to rely only on the dynamical programming principle \eqref{intro.eq.defchichi.delta} and the properties of optimal trajectories. Moreover, the lack of regularity led to an other difference. The characterization of the limit $\chi_0$ used by Davini et al. is slightly different. In their definition of $\mathcal S^-$ the subsolutions are tested against any Mather measure and not only against the subset $\mathcal M_{\mathcal V}$.

We now briefly discuss the structure of the paper. In Section \ref{sec.defin.delta} we set the problem in term of assumptions and notation and we collect the main results previously proved in \cite{masoero2019,cardaliaguet2019weak} that are used in the paper.
 
  Section \ref{sec.conv.Vdelta} is devoted to the convergence of $\bar{\mathcal V}_\delta$. In Subsection \ref{subsec.meas.ind} we introduce the notion of probability measures induced by optimal trajectories of $\mathcal V_\delta$ and we prove that the limit of these probability measures are Mather measures.
  
  In Subsection \ref{subsec.lower.bound} we prove that, if $\chi$ is a subsolution of \eqref{intro.critical.HJ} and $\nu_\delta^{m_0}$ is a probability measure induced by the optimal trajectory $(m_\delta,\alpha_\delta)$, then
  	$$
  \bar{\mathcal{V}}_\delta(m_0)\geq\chi(m_0)-\int_{\mathcal P(\T^d)}\chi(m)\nu_\delta^{m_0}(dm).
  $$
  The proof is done comparing the dynamic programming principle verified by $\bar{\mathcal V}_\delta$ and the one verified by the subsolution $\chi$. 
  
  In Subsection \ref{subsec.smooth.mather} we introduce the notion of \textit{smooth Mather measure} (Definition \ref{defin.smooth.measure}) which are Mather measures which have smooth densities. The smoothness of these measures allows to overcome the lack of regularity of $\bar{\mathcal V}_\delta$ so that we can prove that if $(\mu,p_1)$ is a smooth Mather measure then
  \begin{align}\label{}
  \delta\int_{{\mathcal P}(\T^d)}\bar{\mathcal V}_\delta(\bar m)\mu(d\bar m)\leq 0.
  \end{align}
  
  Moreover, we prove that if $\nu^{m_0}\in\mathcal M_{\mathcal V}$ then $\nu^{m_0}$ a smooth Mather measure.
  
  In Subsection \ref{subsec.conclusion.delta} we collect all the previous results and we finally prove that the limit $\chi_0$ of $\bar{\mathcal V}_\delta$ is uniquely defined by
  
  $$
  \chi_0(m)=\sup_{\chi\in{\mathcal S}^-}\chi(m).
  $$

\section{Assumptions and preliminary results}\label{sec.defin.delta}
\subsection{Notation and assumptions}

As it was mentioned in the introduction, this paper is meant to expand the weak KAM theory in the context of MFG introduced in \cite{cardaliaguet2019weak} and \cite{masoero2019}. Therefore, we will suppose that the very same assumptions are in place.
\begin{remark} Note that some of these hypothesis will not be explicitly used in this paper; especially the ones on the growth of the Hamiltonian and its derivatives. Nonetheless, as we give for granted several results of \cite{cardaliaguet2019weak,masoero2019} we still need to impose them.  
\end{remark}

  We will use as state space the $d-$dimensional flat torus $\T^{d}=\R^{d}/\Z^{d}$. This domain is chosen to avoid boundary conditions and to set the problem on a compact domain. 
We denote respectively by $\mathcal M(\T^d)$ and $\mathcal P(\T^{d})$ the set of Borel measures  and probability measures on $\T^d$. The set $\mathcal{P}(\T^d)$ is a compact, complete and separable metric space when endowed with the $1$-Wasserstein distance $\mathbf d(\cdot,\cdot)$. 
If $m\in C^0([t,T],\mathcal P(\T^d))$ then we set $L^{2}_{m}([t,T]\times\T^{d})$ the set of $m$-measurable functions $f$ such that the integral of $|f|^{2}dm(s)$ over $[t,T]\times\T^{d}$ is finite.

We use the notion of derivative on the metric space $\mathcal P(\T^{d})$ introduced in \cite{cardaliaguet2015master}. We say that $\Phi:\mathcal P(\T^{d})\rightarrow \R$ is $C^{1}$ if there exists a continuous function $\frac{\delta \Phi}{\delta m}:\mathcal P(\T^{d})\times\T^{d}\rightarrow\R$ such that
$$
\Phi(m_{1})-\Phi(m_{2})=\int_{0}^{1}\int_{\T^{d}} \frac{\delta \Phi}{\delta m}((1-t)m_{1}+tm_{2},x)(m_{2}-m_{1})(dx)dt,\qquad\forall m_{1},m_{2}\in\mathcal P(\T^{d}).
$$
As this derivative is defined up to an additive constant, we use the standard normalization
\begin{equation}\label{deriv.conve}
\int_{\T^{d}} \frac{\delta \Phi}{\delta m}(m,x)m(dx)=0.
\end{equation}
If $\frac{\delta \Phi}{\delta m}$ is derivable in the second variable, we define the \textit{intrinsic derivative}
$$
D_m \Phi(m,x)=D_x \frac{\delta \Phi}{\delta m}(m,x).
$$ 
Moreover, we say that a function $\Phi$ belongs to $C^{1,1}(\T^d)$ if $\Phi\in C^1(\T^d)$, $D_m\Phi$ is well defined and $D_m\Phi$ is smooth in the second variable.

We recall that, if $\mu$, $\nu\in\mathcal P(\T^d)$, the $1$-Wasserstein distance is defined by
\begin{equation}\label{RKd2}
\mathbf d(\mu,\nu)= \sup \left\{ \left. \int_{\T^{d}} \phi(x) \,d (\mu - \nu) (x) \right| \mbox{continuous } \phi : \T^{d} \to \mathbb{R},\, \mathrm{Lip} (\phi) \leq 1 \right\}.
\end{equation}

\textbf{Assumptions:} Throughout the paper we will suppose the following conditions:
\begin{enumerate}
	\item $H:\T^d\times\R^d\rightarrow\R$ is of class $C^2$, $p\mapsto D_{pp}H(x,p)$ is Lipschitz continuous, uniformly with respect to $x$. Moreover, there exists $\bar C>0$ that verifies 
	\begin{equation}\label{1Vdelta}
	\bar C^{-1}I_d\leq D_{pp}H(x,p)\leq \bar CI_d, \quad\forall (x,p)\in\T^d\times\R^d
	\end{equation}
	and $\theta\in(0,1)$, $C>0$ such that the following conditions hold true
	\begin{equation}\label{2Vdelta}
	|D_{xx}H(x,p)|\leq C(1+|p|)^{1+\theta},\quad|D_{x,p}H(x,p)|\leq C(1+|p|)^\theta,\quad\forall (x,p)\in\T^d\times\R^d.
	\end{equation}
	
	\item $\mathcal F:\mathcal P(\T^{d})\rightarrow\R$ is of class $C^2$. Its derivative $F:\T^d\times\mathcal P(\T^d)\rightarrow\R$ is twice differentiable in $x$ and $D^2_{xx}F$ is bounded.
	
\end{enumerate}

\subsection{Definitions and preliminary results}
We define ${\mathcal V}_\delta:\mathcal P(\T^d)\rightarrow \R$ as
\begin{equation}\label{def.Vdelta}
{\mathcal V}_\delta(m_0)=\inf_{(m,\alpha)}\int_0^\infty e^{-\delta t}\int_{\T^d}H^*(x,\alpha)dm(t)+\mathcal F(m(t))dt,\quad m_0\in\mathcal P(\T^d)
\end{equation}
where $\delta>0$,  $m\in C^0([0,+\infty),\mathcal P(\T^d))$, $\alpha\in L^2_{m,\delta}([0,+\infty)\times\T^d,\R^d)$, that is $L_{m}^{2}$ with weight $e^{-\delta t}$, and $(m,\alpha)$ verifies on $[0,+\infty)\times\T^{d}$ the Fokker-Plank equation
\begin{equation}\label{fokker.plank}
\begin{cases}
-\partial_t m +\Delta m+{\rm div}(m\alpha)=0\\
m(0)=m_0.
\end{cases}
\end{equation}
Standard arguments in optimal control ensures that  $\mathcal V_\delta$ solves the following dynamic programming principle
\begin{align}\label{Vdelta.ddp.begin}
{\mathcal V}_\delta(m_0)=\inf_{(m,\alpha)}\int_0^t e^{-\delta s}\int_{\T^d}H^*(x,\alpha(s))dm(s)+\mathcal F(m(s))ds+e^{-\delta t}{\mathcal{V}}_\delta(m(t)).
\end{align}
where $(m,\alpha)$ are defined as before. 

As we have anticipated in the introduction we will characterized the limit of $\mathcal V_\delta$ through a special class of subsolutions of the critical equation 
\begin{equation}\label{critical.HJ}
-\int_{\T^d}\dive_y D_m\chi(m,y) m(dy)+\int_{\T^d} H(y, D_m\chi(m,y))m(dy)-\mathcal F(m)=\lambda,\qquad m\in\mathcal P(\T^d),
\end{equation}
where the value $\lambda$ is the one discussed in the introduction. 

We will most often work with functions that do not enjoy enough regularity to solve in classical sense the above equation. Therefore, we need to introduce a weaker definition subsolution and, accordingly, the notion of corrector function. 
\begin{defin}\label{def.HJ.subsol}
	We say that a continuous function $\chi$ on $\mathcal{P}(\T^d)$ is a subsolution of the critical equation \eqref{critical.HJ} if, for any $h>0$ and $m_0\in\mathcal P(\T^d)$,
	\begin{equation}\label{subsol.ine}
	\chi(m_0)\leq\inf_{(m,\alpha)}\int_0^h\int_{\T^d}H^*(x,\alpha(t))dm(t)+\mathcal F(m(t))dt +\chi(m(h))+\lambda h, 
	\end{equation}
	where $m\in C^0([0,h],\mathcal P(\T^d))$, $\alpha\in L^2_{m}([0,h]\times\T^d,\R^d)$ and $(m,\alpha)$ verifies \eqref{fokker.plank} in $[0,h]$ with initial condition $m(0)=m_0$.
	
	If, otherwise, $\chi$ verifies \eqref{subsol.ine}, for any $m_0\in\mathcal P(\T^d)$ and any $h>0$, as an equality, we say that $\chi$ is corrector function.
\end{defin}
We know from \cite[Proposition 1.6]{masoero2019} that $\mathcal V_\delta$ is uniformly Lipschitz continuous with respect to $\delta$ and that $\delta\mathcal V_\delta\rightarrow-\lambda$. We define
$$
\bar{\mathcal V}_\delta(m)=\mathcal V_\delta(m)+\frac{\lambda}{\delta}.
$$

It is clear that $\delta \bar{\mathcal V}_\delta\rightarrow 0$. Moreover we claim that if $c\in\R$ is such that $\mathcal V_\delta+{c}/{\delta}$ is uniformly bounded, then $c=\lambda$. We also claim that $\bar{\mathcal V}_\delta$ converges, at least up to subsequence, to a corrector $\chi_0$ (Definition \ref{def.HJ.subsol}).  The proof of these claims is postponed in Lemma \ref{bound.barV} for which we need some preliminary results.

We will use more than once the fact that, for any $m_0\in\mathcal P(\T^d)$ and any $\delta>0$, the minimization problem \eqref{def.Vdelta} admits minimizer $(m_\delta,\alpha_\delta)$ and, as proved in \cite[Proposition 1.1]{masoero2019}, that $\alpha_\delta(t,x)=D_p H(x,Du_\delta(t,x))$ where, $u_\delta\in C^{1,2}([0,+\infty)\times\T^d)$ and $(u_\delta,m_\delta)$ solves

\begin{equation}\label{delta.system}
\begin{cases} 
-\partial_t u-\Laplace u+\delta u+H(x,Du)=F(x,m) & \mbox{in } \T^d\times[0,+\infty)\\
-\partial_t m+\Laplace m+{\rm div}(mD_pH(x,Du))=0 & \mbox{in }\T^d\times[0,+\infty)\\
m(0)=m_0,\; u\in L^{\infty}([0,+\infty)\times\T^{d}).
\end{cases}
\end{equation}

Moreover, $(m_\delta,\alpha_\delta)$ enjoys the following estimates (for the proof see \cite[Lemma 1.3]{masoero2019}):
\begin{lemma}\label{2estVdelta} There exists $C_1>0$ independent of $m_0,\delta$ such that, if $(u_\delta,m_\delta)$ is a classical solution of \eqref{delta.system}, then
	
	\begin{itemize}
		\item $\Vert D u_\delta\Vert_{L^\infty([0,+\infty)\times\T^d)}\leq C_1$
		\item $\Vert D^2u_\delta\Vert_{L^\infty([0,+\infty)\times\T^d)}\leq C_1$.
		\item $\bm d(m_\delta(s),m_\delta(l))\leq C_1 |l-s|^{1/2}$ for any $l,s\in [0,+\infty)$
	\end{itemize}
	
	Consequently, we also have that $\Vert\partial_{t}u_\delta\Vert_{L^\infty([0,+\infty)\times\T^d)}\leq C_1$ for any $s\in[0,+\infty)$.
\end{lemma}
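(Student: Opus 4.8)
The plan is to prove the three bullets in order — the Lipschitz bound on $u_\delta$ being by far the most delicate — and then to read off the bound on $\partial_t u_\delta$ directly from the first equation of \eqref{delta.system}. Throughout, two uniform facts are available: since $\mathcal F\in C^2$ and $\mathcal P(\T^d)$ is compact, $F$ and $D_xF$ are bounded uniformly in $(x,m)$ (so $F(\cdot,m)$ is Lipschitz in $x$ with a constant independent of $m$); and, by comparing the first equation of \eqref{delta.system} with the constant functions $\pm c_0/\delta$ (using that $H(\cdot,0)$ and $F$ are bounded), $\|\delta u_\delta\|_{L^\infty}\le c_0$ with $c_0$ independent of $\delta$ and $m_0$.

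\textbf{Step 1: the Lipschitz bound.} I expect this to be the main obstacle. The natural tool is Bernstein's method applied to $w:=|Du_\delta|^2$. Differentiating the first equation of \eqref{delta.system} in $x_k$, multiplying by $2\,\partial_{x_k}u_\delta$ and summing yields, with $H=H(x,Du_\delta)$,
\[
-\partial_t w-\Delta w+2|D^2u_\delta|^2+2\delta w+D_pH\cdot Dw+2\,Du_\delta\cdot D_xH=2\,Du_\delta\cdot D_xF .
\]
At a maximum point $(t_0,x_0)$ of $w$ (attained, e.g., by first localizing in time and letting the horizon tend to infinity) one has $Dw=0$, $\Delta w\le0$, $\partial_t w=0$, whence
\[
|D^2u_\delta(t_0,x_0)|^2\ \le\ |Du_\delta|\big(\|D_xF\|_{L^\infty}+|D_xH(x_0,Du_\delta)|\big)\ \le\ C\,\big(w^{1/2}+w^{1+\theta/2}\big)(t_0,x_0),
\]
using that \eqref{2Vdelta} forces $|D_xH(x,p)|\le C(1+|p|^{1+\theta})$. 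The assumption \eqref{1Vdelta} forces $H(x,p)\ge\tfrac12\bar C^{-1}|p|^2-C$, and combining this with $|D^2u_\delta|^2\ge d^{-1}(\Delta u_\delta)^2$ and $\Delta u_\delta=\delta u_\delta-\partial_tu_\delta+H-F$ (recall $\|\delta u_\delta\|_{L^\infty},\|F\|_{L^\infty}\le C$) one gets a lower bound $|D^2u_\delta(t_0,x_0)|^2\ge c\,w(t_0,x_0)^2$ as soon as $w(t_0,x_0)$ is large — the delicate point here being the control of $\partial_t u_\delta$ at $(t_0,x_0)$, for which one uses the finite-horizon truncation (or, alternatively, carries out the whole estimate by doubling of variables, which avoids second derivatives altogether). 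Comparing the last two displays and using $1+\theta/2<2$ (so that $\theta<1$ is used essentially) bounds $w(t_0,x_0)=\|Du_\delta\|_{L^\infty}^2$ by a constant $C_1$ depending only on $\bar C$, $\theta$, $\|D_xF\|_{L^\infty}$ and $c_0$ — in particular not on $\delta$ or $m_0$.

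\textbf{Step 2: the Hessian bound and the bound on $\partial_t u_\delta$.} Once $\|Du_\delta\|_{L^\infty}\le C_1$, the field $Du_\delta$ solves the linear, uniformly parabolic equation obtained by differentiating \eqref{delta.system} in $x$,
\[
-\partial_t(Du_\delta)-\Delta(Du_\delta)+\delta\,Du_\delta+D_pH(x,Du_\delta)\!\cdot\!D(Du_\delta)+D_xH(x,Du_\delta)=D_xF(x,m_\delta),
\]
whose drift $D_pH(x,Du_\delta)$, zeroth-order coefficient and source $D_xF-D_xH$ are all bounded uniformly in $\delta,m_0$, and whose solution $Du_\delta$ is itself uniformly bounded. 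Interior parabolic regularity on unit time-cylinders then gives $\|D^2u_\delta\|_{L^\infty([0,+\infty)\times\T^d)}\le C_1$, uniformly. Substituting back into $\partial_t u_\delta=-\Delta u_\delta+\delta u_\delta+H(x,Du_\delta)-F(x,m_\delta)$ and using $\|\delta u_\delta\|_{L^\infty},\|F\|_{L^\infty}\le C$ together with the bounds on $Du_\delta,D^2u_\delta$ gives $\|\partial_t u_\delta\|_{L^\infty([0,+\infty)\times\T^d)}\le C_1$.

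\textbf{Step 3: the $\tfrac12$-Hölder estimate for $m_\delta$.} Since $\alpha_\delta=D_pH(x,Du_\delta)$ is now bounded, write $m_\delta(t)=\mathrm{Law}(X_t)$, where $dX_t=\alpha_\delta(t,X_t)\,dt+\sqrt2\,dB_t$ and $X_0\sim m_0$. By the Kantorovich--Rubinstein formula \eqref{RKd2}, for $0\le s\le l$,
\[
\bm d(m_\delta(s),m_\delta(l))\ \le\ \E\,|X_l-X_s|\ \le\ \|\alpha_\delta\|_{L^\infty}(l-s)+\sqrt2\,\E\,|B_l-B_s|\ \le\ C_1(l-s)+C(l-s)^{1/2}.
\]
For $l-s\le1$ the square-root term dominates; for $l-s\ge1$ one uses $\bm d\le\operatorname{diam}(\T^d)\le C_1\le C_1(l-s)^{1/2}$. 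In all cases $\bm d(m_\delta(s),m_\delta(l))\le C_1(l-s)^{1/2}$ after enlarging $C_1$, which is the third bullet and completes the sketch.
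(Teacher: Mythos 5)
The paper does not actually prove this lemma: it is quoted verbatim from \cite[Lemma 1.3]{masoero2019}, so there is no in-paper argument to compare yours against. Judged on its own terms, your Steps 2 and 3 are sound and standard: once $\|Du_\delta\|_\infty\le C_1$, the differentiated equation for $Du_\delta$ is linear and uniformly parabolic with bounded coefficients and bounded source, and parabolic $W^{2,p}$/Schauder estimates on unit cylinders (uniform in $t$, since the torus has no spatial boundary and the infinite-horizon backward problem has no terminal layer) give the Hessian bound; the coupling argument through $dX_t=\alpha_\delta\,dt+\sqrt2\,dB_t$ gives the $1/2$-H\"older bound in $\mathbf d$; and $\partial_t u_\delta$ is then read off the equation.

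The genuine gap is exactly the point you flag in Step 1 and do not close. The maximum principle applied to $w=|Du_\delta|^2$ controls $\partial_t w$ at the maximum point, not $\partial_t u_\delta$; your lower bound $|D^2u_\delta|^2\ge d^{-1}(\Delta u_\delta)^2\gtrsim w^2$ requires $\Delta u_\delta=-\partial_t u_\delta+\delta u_\delta+H-F$ and hence $\partial_t u_\delta(t_0,x_0)$ itself, which nothing in Step 1 bounds — and which you only estimate later, in Step 2, using the output of Step 1, so the argument as written is circular. ``Finite-horizon truncation'' does not repair this: it only fixes the sign of $\partial_t w$ at the maximum, still says nothing about $\partial_t u_\delta$, and introduces a new problem when the maximum sits on the terminal slice $t=T$, where $-\partial_t w$ has the wrong sign for the backward operator. (There is also the unaddressed point that $\sup_{[0,+\infty)\times\T^d}w$ need not be attained, and its qualitative finiteness for fixed $\delta$ must be justified before any maximum-point reasoning.) The weak Bernstein/doubling-of-variables route you mention parenthetically is a legitimate way out under \eqref{1Vdelta}--\eqref{2Vdelta} (the exponent $\theta<1$ is what makes the structure condition hold), but it is a different proof that would have to be carried out, not a one-line patch. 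The more robust route — and the one consistent with how the companion estimates in \cite{masoero2019} are obtained — is to use the stochastic control representation of $u_\delta$ and perturb an $\varepsilon$-optimal control over a unit time window (e.g. $\tilde X_s=X_s+(1-(s-t))^+(y-x)$), which yields the uniform Lipschitz bound, and by the symmetric two-sided perturbation the semiconcavity, directly from the boundedness of $\delta u_\delta$ and the growth of $H^*$, with no Bernstein computation at all.
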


\begin{defin}\label{def.mather.measure} We call Mather measure any minimizer of the following minimization problem 
	\begin{equation}\label{dual.min.pb}
	 \inf_{(\mu, p_1)}\int_{{\mathcal P}(\T^d)} \int_{\T^d} H^*\Bigl(y, \frac{dp_1}{d m\otimes \mu}\Bigr) m(dy)+\mathcal F(m)\,\mu(dm), 
	\end{equation}
	where the minimum is taken over $\mu\in {\mathcal P}(\mPT)$ and $p_1$, Borel vector measure on $\mPT\times \T^d$ with value in $\R^d$, such that $p_1$ is absolutely continuous with respect to the measure $dm\otimes \mu:=m(dy)\mu(dm)$ and such that $(\mu, p_1)$ is closed, in the sense that, for any $\Phi\in C^{1,1}(\mathcal P(\T^d))$, 
	\begin{equation}\label{close.rel}
	-\int_{{\mathcal P}(\T^d)\times \T^d} D_m\Phi(m,y) \cdot p_1(dm,dy) +\int_{{\mathcal P}(\T^d)\times \T^d} \dive_y D_m\Phi(m,y) m(dy)\mu(dm) =0.
	\end{equation}
\end{defin} 

As in the standard Aubry-Mather theory, in \cite{cardaliaguet2019weak} it was proven that, if $(\mu,p_1)$ is a Mather measure, then 
\begin{align}\label{inf.equal.lambda}
	\int_{{\mathcal P}(\T^d)} \int_{\T^d} H^*\Bigl(y, \frac{dp_1}{d m\otimes \mu}\Bigr) m(dy)+\mathcal F(m)\,\mu(dm)=-\lambda.
\end{align}

\section{Convergence of $\bar{\mathcal V}_\delta$}\label{sec.conv.Vdelta}

\subsection{Measures induced by optimal trajectories}\label{subsec.meas.ind}

Let  $(m_\delta,\alpha_\delta)$ be an optimal trajectory for $\bar{\mathcal V}_\delta(m_0)$, then if $C_1>0$ is the constant that appears in Lemma \eqref{2estVdelta}, we have $\Vert \alpha_\delta\Vert_{W^{1,\infty}(\T^d\times[0,+\infty))}\leq C_1$. 

We define $\nu_\delta^{m_0}\in\mathcal P(\T^d\times E)$ as follows
\begin{align}\label{defin.numdelta}
\int_{{\mathcal P}(\T^d)\times E}f(m,\alpha)\nu_\delta^{m_0}(dm,d\alpha)=\delta\int_0^\infty e^{-\delta s}f(m^\delta(s),\alpha^\delta(s))ds,
\end{align}

where 
$$
E:= \{ \alpha \in W^{1,\infty}(\T^d, \R^d), \; \|\alpha\|_\infty+\|D\alpha\|_\infty\leq C\}.
$$
Note that, we know from Lemma \ref{2estVdelta} that we can chose $C$ independent of $\delta$.
As $\mathcal P(\T^d)\times E$ is compact when endowed with the uniform convergence, we can suppose that $\nu_\delta^{m_0}$ weakly converges, up to subsequence, to a probability measure $\bar\nu$ on $E$. 
Let $\mu$ be the first marginal of $\bar\nu$ and let us define the vector measure $p_1$ on $\mPT\times \T^d$ as 
$$
\int_{\mPT\times \T^d} \phi(m,y)\cdot p_1(dm, dy) = \int_{\mPT\times E} \int_{\T^d} \phi(m,y)\cdot \alpha(y) m(dy) \bar\nu(dm,d\alpha)
$$
for any test function $\phi\in C^0(\mPT\times \T^d, \R^d)$. Note that $p_1$ is absolutely continuous with respect to $\mu$, since, if we disintegrate $\bar\nu$ with respect to $\mu$: $\bar\nu=\bar\nu_m(d\alpha)\mu(dm)$, then 
\begin{align}\label{p1.defin}
p_1(dm, dy)= \int_{E} \alpha(y) m(dy) \bar\nu_m(d\alpha) \mu(dm)
\end{align}
and so
\begin{align}\label{p1dev.defin}
\frac{dp_1}{dm\otimes \mu}(m,y)= \int_{E} \alpha(y)\bar\nu_m(d\alpha). 
\end{align}

In the next proposition we prove that the couple $(\mu,p_1)$ defined above is a Mather measure.

\begin{prop}\label{nu.mather}
	Let $(m_\delta,\alpha_\delta)$ be an optimal trajectory for $\bar{\mathcal V}_\delta(m_0)$ and $\nu_\delta^{m_0}$ be the probability measure defined by \eqref{defin.numdelta}. If $\bar\nu\in\mathcal{P}(\T^d)$ is a weak limit of $\nu_\delta^{m_0}$ (possibly up to subsequence), $\mu$ is the first marginal of $\bar \nu$ and $p_1$ is defined as \eqref{p1.defin}, then $(\mu,p_1)$ is a Mather measure. 
\end{prop}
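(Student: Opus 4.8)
The plan is to verify that the pair $(\mu, p_1)$ constructed as a weak limit of the measures $\nu_\delta^{m_0}$ satisfies the two defining properties of a Mather measure: the closedness relation \eqref{close.rel}, and the fact that it achieves the minimal value $-\lambda$ in \eqref{dual.min.pb}. Since we already know from \eqref{inf.equal.lambda} that every closed pair gives an energy $\geq -\lambda$ with equality precisely for minimizers, it suffices to prove closedness together with the energy bound $\leq -\lambda$; actually, once closedness is established, the variational characterization reduces the problem to showing the energy does not exceed $-\lambda$.

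First I would establish the closedness relation \eqref{close.rel}. Fix $\Phi\in C^{1,1}(\mathcal P(\T^d))$. Along an optimal trajectory $(m_\delta, \alpha_\delta)$, which solves the Fokker--Planck equation in \eqref{fokker.plank}, the chain rule for the flat derivative gives
\[
\frac{d}{ds}\Phi(m_\delta(s)) = \int_{\T^d}\Big( \Delta_y \tfrac{\delta\Phi}{\delta m}(m_\delta(s),y) + D_m\Phi(m_\delta(s),y)\cdot\alpha_\delta(s,y)\Big) m_\delta(s,dy),
\]
equivalently $\tfrac{d}{ds}\Phi(m_\delta(s)) = \int_{\T^d}\big(\dive_y D_m\Phi\,m_\delta(s) + D_m\Phi\cdot\alpha_\delta(s)\, m_\delta(s)\big)$. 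Now multiply by $\delta e^{-\delta s}$ and integrate over $s\in[0,\infty)$. Integration by parts in $s$ turns the left side into $\delta^2\int_0^\infty e^{-\delta s}\Phi(m_\delta(s))\,ds - \delta\Phi(m_0)$, which is bounded by $C\delta$ uniformly (since $\Phi$ is continuous on the compact $\mathcal P(\T^d)$, hence bounded) and therefore tends to $0$ as $\delta\to 0$. The right side, by the definition \eqref{defin.numdelta} of $\nu_\delta^{m_0}$, equals
\[
\int_{\mathcal P(\T^d)\times E}\Big( \int_{\T^d}\dive_y D_m\Phi(m,y)\,m(dy) + \int_{\T^d} D_m\Phi(m,y)\cdot\alpha(y)\,m(dy)\Big)\nu_\delta^{m_0}(dm,d\alpha).
\]
Passing to the limit along the subsequence where $\nu_\delta^{m_0}\rightharpoonup\bar\nu$ — legitimate because the integrand is a bounded continuous function of $(m,\alpha)\in\mathcal P(\T^d)\times E$ (here one uses the uniform $W^{1,\infty}$ bound on $\alpha$ from Lemma \ref{2estVdelta}, the $C^{1,1}$ regularity of $\Phi$, and continuity of $m\mapsto D_m\Phi(m,\cdot)$) — yields exactly
\[
\int_{\mathcal P(\T^d)\times\T^d}\dive_y D_m\Phi(m,y)\,m(dy)\mu(dm) - \Big(-\int_{\mathcal P(\T^d)\times\T^d} D_m\Phi(m,y)\cdot p_1(dm,dy)\Big) = 0,
\]
using the definitions of $\mu$ and $p_1$ in \eqref{p1.defin}. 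This is precisely \eqref{close.rel}.

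Next I would bound the energy. Since $\alpha_\delta(s,y) = D_pH(y, Du_\delta(s,y))$ along the optimal trajectory, the Fenchel identity gives $H^*(y,\alpha_\delta(s,y)) = \alpha_\delta(s,y)\cdot Du_\delta - H(y,Du_\delta)$, so the running cost $\int_{\T^d} H^*(y,\alpha_\delta(s))\,m_\delta(s) + \mathcal F(m_\delta(s))$ is uniformly bounded in $\delta$ and $s$ by Lemma \ref{2estVdelta} and the regularity of $\mathcal F$. One then relates the asymptotic time-average of this cost to $\delta\bar{\mathcal V}_\delta(m_0) = \delta\mathcal V_\delta(m_0) + \lambda$: multiplying the cost by $\delta e^{-\delta s}$, integrating, and using $\delta\mathcal V_\delta \to -\lambda$, we get that $\delta\int_0^\infty e^{-\delta s}\big(\int H^*(y,\alpha_\delta(s))m_\delta(s) + \mathcal F(m_\delta(s))\big)ds \to -\lambda$. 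By convexity of $H^*$ in the velocity variable, Jensen's inequality applied to the disintegration $\bar\nu = \bar\nu_m\,\mu$ gives $\int_{\T^d} H^*\big(y, \tfrac{dp_1}{dm\otimes\mu}(m,y)\big) m(dy) \leq \int_E \int_{\T^d} H^*(y,\alpha(y))\,m(dy)\,\bar\nu_m(d\alpha)$; integrating against $\mu$ and adding $\int\mathcal F\,d\mu$, together with weak lower semicontinuity / the convergence of the costs, yields that the energy of $(\mu,p_1)$ is $\leq -\lambda$. Combined with \eqref{inf.equal.lambda}, equality holds and $(\mu,p_1)$ is a Mather measure.

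The main obstacle I anticipate is the passage to the limit in the energy functional: $H^*$ is convex but potentially of superlinear growth, so one must be careful that the uniform bounds from Lemma \ref{2estVdelta} (which bound $\|Du_\delta\|_\infty$, hence $\|\alpha_\delta\|_\infty$, uniformly) genuinely control $H^*(y,\alpha_\delta)$ uniformly, and that the nonlinear functional $m\mapsto \int H^*(y, \tfrac{dp_1}{dm\otimes\mu})m(dy)$ behaves well under the weak convergence $\nu_\delta^{m_0}\rightharpoonup\bar\nu$ — this is where Jensen and lower semicontinuity of convex integral functionals enter, and where one must check that the disintegration manipulations are valid. The closedness step, by contrast, is essentially a clean application of the chain rule on $\mathcal P(\T^d)$ plus the vanishing of the boundary term $\delta\Phi(m_0)$, and should present no serious difficulty.
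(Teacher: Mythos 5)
Your proposal is correct and follows essentially the same route as the paper: closedness is obtained by applying the chain rule to $\Phi(m_\delta(\cdot))$ along the optimal trajectory, integrating against $\delta e^{-\delta s}$, and letting the boundary terms vanish as $\delta\to 0$; minimality is obtained by Jensen's inequality on the disintegration $\bar\nu=\bar\nu_m\,\mu$ together with $\delta\mathcal V_\delta(m_0)\to-\lambda$ and \eqref{inf.equal.lambda}. The growth issue you flag for $H^*$ is harmless here since $\alpha$ ranges over the compact set $E$, so the cost integrand is bounded and continuous and the weak convergence of $\nu_\delta^{m_0}$ passes to the limit directly, exactly as in the paper.
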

\begin{proof}
We first need to check that $(\mu,p_1)$ is closed in the sense of \eqref{close.rel}.
 Let $\Phi\in C^{1,1}({\mathcal P}(\T^d))$, then 
\begin{align*}
\frac{d}{dt} \Phi(m_\delta(t))= \inte  \dive(D_m\Phi(m_\delta(t),y))m_\delta(t,dy) + \inte D_m\Phi(m_\delta(t),y)\cdot \alpha_\delta(t,y) m_\delta(dy).
\end{align*}
So,
\begin{align}\label{}
\int_0^t\inte  \dive(D_m\Phi(m_\delta(t),y))m_\delta(t,dy) + \inte D_m\Phi(m_\delta(t),y)\cdot \alpha_\delta(t,y) m_\delta(dy)dt=\Phi(m_\delta (t))-\Phi(m_0).
\end{align}
Using the above relation and integrating by parts, we get
\begin{align*}
& \left\vert\int_{\mPT\times E}  \inte \dive(D_m\Phi(m,y)) +  D_m\Phi(m,y)\cdot \alpha(y) m(dy) \ \nu_\delta^{m_0}(dm,d\alpha) \right\vert \\
& = \delta \left\vert\int_0^{+\infty} e^{-\delta t}\inte  \dive(D_m\Phi(m_\delta(t),y))m_\delta(t,dy) + \inte D_m\Phi(m_\delta(t),y)\cdot \alpha_\delta(t,y) m_\delta(dy) dt\right\vert \\ 
&\leq \delta \left\vert\lim_{T\rightarrow+\infty} e^{-\delta T}(\Phi(m_\delta(T))-\Phi(m_\delta(m_0)) \right\vert+\delta^2\left\vert \int_0^{+\infty}e^{-\delta t}(\Phi(m_\delta(t))-\Phi(m_\delta(m_0))dt\right\vert\\
& \leq \delta^2K\int_0^{+\infty}e^{-\delta t}dt=\delta K.  
\end{align*}
Letting $\delta\to0$ we find
\begin{align*}
& \int_{\mPT\times E}  \inte \dive(D_m\Phi(m,y)) +  D_m\Phi(m,y)\cdot \alpha(y) m(dy) \ \bar\nu(dm,d\alpha)=0.
\end{align*}
According to the definition of $p_1$ we can read the last equality as
\begin{align*}
& \int_{\mPT}  \inte \dive(D_m\Phi(m,y)) m(dy) \mu(dm) + \int_{\mPT\times \T^d}  D_m\Phi(m,y)\cdot p_1(dm,dy)=0,
\end{align*}
which proves that $(\mu,p_1)$ is closed. The last step is to prove that $(\mu,p_1)$ is a minimizer of \eqref{dual.min.pb}. Indeed, by convexity of $H^*$, we have
\begin{align}\label{}
&\int_{\mPT} \left[\int_{\T^d} H^*\left(y, \frac{dp_1}{dm\otimes \mu} (m,y)\right)m(dy) + {\mathcal F}(m)\right]\ \mu(dm)\\
&=\int_{\mPT} \left[\int_{\T^d} H^*\left(y, \int_E\alpha(y)\bar\nu_m(d\alpha)\right)m(dy) + {\mathcal F}(m)\right]\ \mu(dm)\\
&\leq\int_{\mPT}\int_E \left[\int_{\T^d} H^*(y, \alpha(y))m(dy) + {\mathcal F}(m)\right]\ \bar\nu_m(d\alpha)\mu(dm)\\
&=\int_{\mPT\times E} \left[\int_{\T^d} H^*(y, \alpha(y))m(dy) + {\mathcal F}(m)\right]\ \bar\nu(dm,d\alpha) \\
&=\lim_{\delta\rightarrow0}\int_{\T^d} H^*(y, \alpha(y))m(dy) + {\mathcal F}(m)\ \nu_\delta^{m_0}(dm,d\alpha)=\lim_{\delta\rightarrow0}\delta \mathcal{V}_\delta (m_0)=-\lambda.
\end{align}

\end{proof}
\subsection{Lower bound}\label{subsec.lower.bound}
In this section we will first prove the analogue of \cite[Lemma 3.5]{davini2016convergence} and then the boundedness of $\bar{\mathcal{V}}_\delta$. Note that, the lack of a proper mollification for functions defined on $\mathcal P(\T^d)$, forced us to find a proof that differs from the one in \cite{davini2016convergence}. While \cite{davini2016convergence} used approximation of solutions of the critical equation, we work here at the level of optimal trajectories.

\begin{prop}\label{punto.3}
	Let $m_0\in\mathcal P(\T^d)$, $(m_\delta,\alpha_\delta)$ be an optimal trajectory for $\bar{\mathcal V}_\delta(m_0)$ and $\nu_\delta^{m_0}$ be the probability measure defined by \eqref{defin.numdelta}. Then, for any $\chi$ subsolution of \eqref{critical.HJ},
	$$
	\bar{\mathcal{V}}_\delta(m_0)\geq\chi(m_0)-\int_{\mathcal P(\T^d)}\chi(m)\nu_\delta^{m_0}(dm).
	$$
\end{prop}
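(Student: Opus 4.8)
The plan is to compare the dynamic programming principle \eqref{Vdelta.ddp.begin} satisfied by $\bar{\mathcal V}_\delta$ with the subsolution inequality \eqref{subsol.ine} satisfied by $\chi$, evaluated along the \emph{optimal} trajectory $(m_\delta,\alpha_\delta)$ for $\bar{\mathcal V}_\delta(m_0)$. First I would rewrite the DPP for $\bar{\mathcal V}_\delta$: since $\mathcal V_\delta = \bar{\mathcal V}_\delta - \lambda/\delta$, equation \eqref{Vdelta.ddp.begin} becomes, for the optimal pair and for any $t>0$,
\[
\bar{\mathcal V}_\delta(m_0) = \int_0^t e^{-\delta s}\Bigl(\inte H^*(x,\alpha_\delta(s))dm_\delta(s)+\mathcal F(m_\delta(s))\Bigr)ds + e^{-\delta t}\bar{\mathcal V}_\delta(m_\delta(t)) + \frac{\lambda}{\delta}\bigl(1-e^{-\delta t}\bigr).
\]
Letting $t\to+\infty$ and using $\delta\bar{\mathcal V}_\delta\to 0$ together with the boundedness provided by Lemma \ref{2estVdelta} (so $e^{-\delta t}\bar{\mathcal V}_\delta(m_\delta(t))\to 0$), one obtains the "integral form"
\[
\bar{\mathcal V}_\delta(m_0) = \int_0^\infty e^{-\delta s}\Bigl(\inte H^*(x,\alpha_\delta(s))dm_\delta(s)+\mathcal F(m_\delta(s))\Bigr)ds + \frac{\lambda}{\delta}.
\]

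Next I would exploit the subsolution property. For each fixed $t>0$, applying \eqref{subsol.ine} to $\chi$ with the \emph{restriction} of the optimal trajectory on $[0,t]$ (which is an admissible competitor, not necessarily optimal for $\chi$) gives
\[
\chi(m_0) \le \int_0^t \Bigl(\inte H^*(x,\alpha_\delta(s))dm_\delta(s)+\mathcal F(m_\delta(s))\Bigr)ds + \chi(m_\delta(t)) + \lambda t.
\]
The idea is then to integrate this inequality against the measure $\delta e^{-\delta t}dt$ on $(0,\infty)$, which has total mass $1$. The left side becomes $\chi(m_0)$, the last term becomes $\lambda\,\delta\int_0^\infty t e^{-\delta t}dt = \lambda/\delta$, and the middle term involving $\chi(m_\delta(t))$ becomes exactly $\int_{\mathcal P(\T^d)}\chi(m)\,\nu_\delta^{m_0}(dm)$ by the definition \eqref{defin.numdelta}. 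For the running-cost term I would use Fubini: $\delta\int_0^\infty e^{-\delta t}\bigl(\int_0^t g(s)ds\bigr)dt = \delta\int_0^\infty g(s)\bigl(\int_s^\infty e^{-\delta t}dt\bigr)ds = \int_0^\infty e^{-\delta s}g(s)ds$, where $g(s) = \inte H^*(x,\alpha_\delta(s))dm_\delta(s)+\mathcal F(m_\delta(s))$. This turns the running-cost term into precisely the integral appearing in the formula for $\bar{\mathcal V}_\delta(m_0)$ derived above. Combining, I get
\[
\chi(m_0) \le \Bigl(\bar{\mathcal V}_\delta(m_0) - \frac{\lambda}{\delta}\Bigr) + \int_{\mathcal P(\T^d)}\chi(m)\,\nu_\delta^{m_0}(dm) + \frac{\lambda}{\delta},
\]
which rearranges to the claimed inequality.

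The main obstacle I anticipate is justifying the limiting/integration steps rigorously: one needs the optimal $(m_\delta,\alpha_\delta)$ to exist (granted, via \cite[Proposition 1.1]{masoero2019}), the running cost $g(s)$ to be such that the Fubini exchange and the $\delta e^{-\delta t}dt$-integration are legitimate — here the uniform bounds on $Du_\delta$, $D^2u_\delta$ from Lemma \ref{2estVdelta} give $\alpha_\delta = D_pH(x,Du_\delta)$ uniformly bounded, hence $g$ bounded uniformly in $s$ and $\delta$, so everything is finite and the exchanges are routine. One also needs $\bar{\mathcal V}_\delta$ itself to be bounded (needed to kill the boundary term $e^{-\delta t}\bar{\mathcal V}_\delta(m_\delta(t))$ as $t\to\infty$); strictly this is the content of the not-yet-proved Lemma \ref{bound.barV}, so I would instead avoid passing to the limit in $t$ altogether: keep $t$ finite, integrate the finite-$t$ DPP identity for $\bar{\mathcal V}_\delta$ and the finite-$t$ subsolution inequality for $\chi$ against $\delta e^{-\delta t}dt$ separately, and subtract. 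Then the only boundary contribution is $\delta\int_0^\infty e^{-\delta t} e^{-\delta t}\bar{\mathcal V}_\delta(m_\delta(t))dt$ minus $\delta\int_0^\infty e^{-\delta t}\chi(m_\delta(t))dt$; the second is bounded since $\chi$ is continuous on the compact space $\mathcal P(\T^d)$, and the first can be handled using only $\delta\bar{\mathcal V}_\delta \to 0$ and the Lipschitz bound of $\mathcal V_\delta$ — this is the delicate bookkeeping point, and organizing it so as not to circularly invoke Lemma \ref{bound.barV} is where the care is needed.
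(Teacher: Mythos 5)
Your proof is correct, and it takes a genuinely different (and in fact shorter) route than the paper's. The paper also compares the dynamic programming principle with the subsolution inequality and integrates against $\delta e^{-\delta t}\,dt$, but it does so on small increments $[t,t+h]$ of the optimal trajectory: it rewrites $\int_0^h\mathcal L(t,s)\,ds$ in terms of $\bar{\mathcal V}_\delta$ by an integration by parts in $s$, controls the resulting error terms (of size $O(h^{3/2}/\delta)$) using the uniform Lipschitz continuity of $\bar{\mathcal V}_\delta$ and the $\tfrac12$-H\"older continuity of $t\mapsto m_\delta(t)$ from Lemma \ref{2estVdelta}, and only at the end divides by $h$ and sends $h\to0$. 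You instead apply the subsolution inequality on the whole interval $[0,t]$ starting from $m_0$, integrate once against $\delta e^{-\delta t}\,dt$, and use Fubini to identify the running-cost term with $\int_0^\infty e^{-\delta s}g(s)\,ds=\mathcal V_\delta(m_0)$; this eliminates both the $h\to0$ limit and the need for the H\"older/Lipschitz estimates. One further simplification you could make: your opening step (deriving the integral form of $\bar{\mathcal V}_\delta(m_0)$ from the DPP by letting $t\to+\infty$) is not needed at all, and with it goes your worry about circularly invoking Lemma \ref{bound.barV} --- the identity $\bar{\mathcal V}_\delta(m_0)=\int_0^\infty e^{-\delta s}g(s)\,ds+\lambda/\delta$ is nothing but the definition \eqref{def.Vdelta} evaluated at the minimizer $(m_\delta,\alpha_\delta)$, so no boundary term ever appears. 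With that observation your argument is fully self-contained and arguably preferable to the one in the paper.
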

\begin{proof}
	The dynamic programming principle \eqref{Vdelta.ddp.begin} says that, if $(m_\delta,\alpha_\delta)$ is an optimal trajectory for $\bar{\mathcal V}_\delta (m_0)$, then, for any $t>0$, $(m_\delta,\alpha_\delta)$ is a minimizer of 
	\begin{align}\label{Vdelta.ddp}
	\bar{\mathcal V}_\delta(m_0)=\inf_{(m,\alpha)}\int_0^t e^{-\delta s}\int_{\T^d}H^*(x,\alpha(s))dm(s)+\mathcal F(m(s))ds+e^{-\delta t}\bar{\mathcal{V}}_\delta(m(t))+\frac{\lambda}{\delta}.
	\end{align}
	
	In order to keep the notation as simple as possible we define 
	$$
	\mathcal{L}(t,s)=\int_{{\mathcal P}(\T^d)}H^*(\alpha_\delta(t+s))dm_\delta(t+s)+\mathcal{F}(m_\delta(t+s))+\lambda
	$$
	and
	$$
	\mathbb{L}_\delta(t,h)=\int_0^he^{-\delta s}\mathcal L(t,s)ds,
	$$
	so that, according to the dynamic programming principle \eqref{Vdelta.ddp},
	\begin{equation}\label{serv.Vdelta}
	\mathbb{L}_\delta(t,h)=\bar{\mathcal V}_\delta(m_\delta(t))-e^{-\delta h}\bar{\mathcal V}_\delta(m_\delta(t+h)).
	\end{equation}
	We start with the following computation
	$$
	\int_0^h\mathcal{L}(t,s)ds=\int_0^he^{\delta s}e^{-\delta s}\mathcal{L}(t,s)ds=e^{\delta h}\mathbb{L}_\delta(t,h)-\delta\int_0^he^{\delta s}\mathbb{L}_\delta(t,s)ds.
	$$
	Plugging \eqref{serv.Vdelta} into the last equality, we get
	\begin{equation}\label{serv2.Vdelta}
	\int_0^h\mathcal{L}(t,s)ds=e^{\delta h}\bar{\mathcal V}_\delta(m_\delta(t))-\bar{\mathcal V}_\delta(m_\delta(t+h))-\delta\int_0^he^{\delta s}\bar{\mathcal V}_\delta(m_\delta(t))-\bar{\mathcal V}_\delta(m_\delta(t+s))ds.
	\end{equation}
	Now we can focus on $\int_{\mathcal P(\T^d)}\chi(m)\nu_\delta^{m_0}(dm)$. Using the above relations and the definitions of $\nu_\delta^{m_0}$ and subsolution of \eqref{critical.HJ}, we get
	$$
	\int_{\mathcal P(\T^d)}\chi(m)\nu_\delta^{m_0}(dm)=\delta\int_0^{+\infty}e^{-\delta t}\chi(m_\delta(t))dt\leq\delta\int_0^{+\infty}e^{-\delta t}\int_0^h\mathcal L(t,s)ds+\chi(m_\delta(t+h))dt.
	$$
	Arranging the terms and dividing by $\delta$, we find
	\begin{equation}\label{serv.left}
	\int_0^{+\infty}e^{-\delta t}\chi(m_\delta(t))dt-\int_0^{+\infty}e^{-\delta t}\chi(m_\delta(t+h))dt\leq\int_0^{+\infty}e^{-\delta t}\int_0^h\mathcal L(t,s)dsdt.
	\end{equation}
	We first consider the left hand side. If we run a change of variable we get
	\begin{align}\label{serv.Vdelta.left}
	&\int_0^{+\infty}e^{-\delta t}\chi(m_\delta(t))dt-\int_0^{+\infty}e^{-\delta t}\chi(m_\delta(t+h))dt\\
	&=\int_0^{+\infty}e^{-\delta t}\chi(m_\delta(t))dt-e^{\delta h}\int_{h}^{+\infty}e^{-\delta t}\chi(m_\delta(t))dt\\
	&=(1-e^{\delta h})\int_{0}^{+\infty}e^{-\delta (t)}\chi(m_\delta(t))dt+e^{\delta h}\int_{0}^{h}e^{-\delta t}\chi(m_\delta(t))dt.
	\end{align}
	
	We now work on the right hand side of \eqref{serv.left}. Using again \eqref{serv2.Vdelta} we get
	
	\begin{align}\label{}
	&\int_0^{+\infty}e^{-\delta t}\int_0^h\mathcal L(t,s)dsdt\\
	&=\int_0^{+\infty}e^{-\delta t}\left(e^{\delta h}\bar{\mathcal V}_\delta(m_\delta(t))-\bar{\mathcal V}_\delta(m_\delta(t+h))\right)dt-\delta\int_0^{+\infty}e^{-\delta t}\int_0^he^{\delta s}\bar{\mathcal V}_\delta(m_\delta(t))-\bar{\mathcal V}_\delta(m_\delta(t+s))dsdt.
	\end{align}
	We now look separately the two addends of the above line. First we have that
	\begin{align}\label{ser.Vdelta.fin0}
	&\int_0^{+\infty}e^{-\delta t}\left(e^{\delta h}\bar{\mathcal V}_\delta(m_\delta(t))-V_\delta(m_\delta(t+h))\right)dt=\int_0^{+\infty}e^{-\delta (t-h)}\bar{\mathcal V}_\delta(m_\delta(t))-\int_{h}^{+\infty}e^{-\delta (t-h)}\bar{\mathcal V}_\delta(m_\delta(t))dt\\
	&=\int_{0}^he^{-\delta (t-h)}\bar{\mathcal V}_\delta(m_\delta(t))dt=e^{\delta h}\int_0^he^{-\delta t}\bar{\mathcal V}_\delta (m_\delta(t))dt,
	\end{align}
	then,
	\begin{align}\label{serv.Vdelta.fin}
	&\left\vert\int_0^{+\infty}e^{-\delta t}\int_0^he^{\delta s}\bar{\mathcal V}_\delta(m_\delta(t))-\bar{\mathcal V}_\delta(m_\delta(t+s))dsdt\right\vert\leq\\
	&\int_0^{+\infty}e^{-\delta t}\int_0^he^{\delta s}\left\vert \bar{\mathcal V}_\delta(m_\delta(t))-e^{-\delta s}\bar{\mathcal V}_\delta(m_\delta(t+s))\right\vert dsdt\leq\\
	& e^{\delta h}\int_0^{+\infty}e^{-\delta t}\int_0^h\left\vert \bar{\mathcal V}_\delta(m_\delta(t))-e^{-\delta s}\bar{\mathcal V}_\delta(m_\delta(t+s))\right\vert dsdt.
	\end{align}
	Note that, as $\bar{\mathcal V}_\delta$ is Lipschitz continuous. Moreover, we know from Lemma \ref{2estVdelta} that there exists a $C>0$ independent of $m_0$ and $t$ such that
	$$
	\bm d(m_\delta (t+h),m_\delta(t))\leq C h^{\frac{1}{2}}.
	$$
	Then, for small $h>0$, if $s\leq h$
	\begin{align}
	&\left\vert \bar{\mathcal V}_\delta(m_\delta(t))-e^{-\delta s}\bar{\mathcal V}_\delta(m_\delta(t+s))\right\vert=\left\vert (1-e^{-\delta s}) \bar{\mathcal V}_\delta(m_\delta(t))-e^{-\delta s}(\bar{\mathcal V}_\delta(m_\delta(t+s))-\bar{\mathcal V}_\delta(m(t)))\right\vert\leq\\
	&\left\vert \bar{\mathcal V}_\delta(m_\delta(t))(1-e^{-\delta s})\right\vert+\left\vert (\bar{\mathcal V}_\delta(m_\delta(t+s))-\bar{\mathcal V}_\delta(m(t)))\right\vert\leq K (h+\bm{d}(m_\delta(t+s),m_\delta(t)))\leq C_1h^{\frac{1}{2}}.
	\end{align}
	Coming back to \eqref{serv.Vdelta.fin}, we get
	\begin{align}\label{serv.Vdelta.fin2}
	&\left\vert\int_0^{+\infty}e^{-\delta t}\int_0^he^{\delta s}\bar{\mathcal V}_\delta(m_\delta(t))-\bar{\mathcal V}_\delta(m_\delta(t+s))dsdt\right\vert\\
	&\leq e^{\delta h}\int_0^{+\infty}e^{-\delta t}C_1 h\, h^{\frac{1}{2}}dt=\frac{e^{\delta h}}{\delta}C_1 h\, h^{\frac{1}{2}}.
	\end{align}
	If we plug \eqref{serv.Vdelta.left}, \eqref{ser.Vdelta.fin0} and \eqref{serv.Vdelta.fin2} into \eqref{serv.left}, we get
	\begin{align}\label{}
	&(1-e^{\delta h})\int_{0}^{+\infty}e^{-\delta (t)}\chi(m_\delta(t))dt+e^{\delta h}\int_{0}^{h}e^{-\delta t}\chi(m_\delta(t))dt\\
	&\leq e^{\delta h}\int_0^he^{-\delta t}\bar{\mathcal V}_\delta (m_\delta(t))dt+\frac{e^{\delta h}}{\delta}C_1 h\, h^{\frac{1}{2}}.
	\end{align}
	Now, we just need to divide by $h$ and let $h$ go to $0$ to find
	\begin{align}\label{}
	-\delta\int_0^{+\infty}e^{-\delta t}\chi(m_\delta (t))dt+\chi(m_0)\leq\bar{\mathcal V}_\delta(m_0).
	\end{align}
\end{proof}
We can now prove that $\bar{\mathcal{V}}$ does actually converge, up to subsequence, to a corrector $\chi_0$.
\begin{lemma}\label{bound.barV}
	The family of function $\{\bar{\mathcal V}_\delta\}_\delta$ is uniformly bounded, uniformly Lipschitz continuous and therefore admits, up to subsequence, a uniform limit $\chi_0$. Moreover, $\chi_0$ is a corrector function.
\end{lemma}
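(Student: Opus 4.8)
The plan is to bracket $\bar{\mathcal V}_\delta$ between two $\delta$-uniform bounds: the lower one comes directly from Proposition \ref{punto.3} applied to a fixed corrector, and the upper one from testing the dynamic programming principle \eqref{Vdelta.ddp} against a curve calibrated by that corrector; equicontinuity being free, one then passes to the limit $\delta\to0$ in \eqref{Vdelta.ddp} to identify $\chi_0$ as a corrector. The uniform Lipschitz bound is immediate: $\bar{\mathcal V}_\delta-\mathcal V_\delta=\lambda/\delta$ is independent of $m$, so $\bar{\mathcal V}_\delta$ has the same Lipschitz constant as $\mathcal V_\delta$, which is bounded uniformly in $\delta$ by \cite[Proposition 1.6]{masoero2019}. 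Since $(\mathcal P(\T^d),\mathbf d)$ is compact (hence of finite diameter $D$), a bound on $\bar{\mathcal V}_\delta$ at a single point will upgrade to equiboundedness, and Arzelà--Ascoli will then provide a subsequence converging uniformly to some $\chi_0$.

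\textbf{The two bounds.} A corrector $\chi$ exists by \cite{cardaliaguet2019weak,masoero2019}, and it is in particular a continuous subsolution of \eqref{critical.HJ} in the sense of Definition \ref{def.HJ.subsol}. Proposition \ref{punto.3} gives $\bar{\mathcal V}_\delta(m_0)\ge \chi(m_0)-\int_{\mathcal P(\T^d)}\chi(m)\,\nu_\delta^{m_0}(dm)\ge -2\|\chi\|_\infty$, uniformly in $m_0,\delta$, since $\chi$ is bounded on the compact set $\mathcal P(\T^d)$ and $\nu_\delta^{m_0}$ is a probability measure. For the upper bound, fix $m_0$ and a curve $(m^*,\alpha^*)$ on $[0,+\infty)$ calibrated for $\chi$, i.e. $\chi(m_0)=\int_0^t\big(\int_{\T^d}H^*(x,\alpha^*)dm^*+\mathcal F(m^*)\big)ds+\chi(m^*(t))+\lambda t$ for all $t$; existence of such a curve follows from the weak KAM analysis in \cite{cardaliaguet2019weak,masoero2019} (alternatively it can be obtained by a diagonal extraction from minimizers of the finite-horizon problems that define $\chi$, using that restrictions of minimizers are again minimizers and the uniform estimates for optimal curves). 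Put $g(s)=\int_{\T^d}H^*(x,\alpha^*(s))dm^*(s)+\mathcal F(m^*(s))+\lambda$, so $G(t):=\int_0^t g(s)\,ds=\chi(m_0)-\chi(m^*(t))$ satisfies $\|G\|_\infty\le 2\|\chi\|_\infty$ and $G(0)=0$. Testing \eqref{Vdelta.ddp} (with the $\lambda$-terms regrouped as in the proof of Proposition \ref{punto.3}) against $(m^*,\alpha^*)$ and integrating by parts,
\[
\bar{\mathcal V}_\delta(m_0)\le \int_0^t e^{-\delta s}g(s)\,ds+e^{-\delta t}\bar{\mathcal V}_\delta(m^*(t))
= e^{-\delta t}G(t)+\delta\!\int_0^t e^{-\delta s}G(s)\,ds+e^{-\delta t}\bar{\mathcal V}_\delta(m^*(t))
\le 4\|\chi\|_\infty+e^{-\delta t}\bar{\mathcal V}_\delta(m^*(t)).
\]
Because $\bar{\mathcal V}_\delta$ is Lipschitz uniformly in $\delta$ and $\mathcal P(\T^d)$ has diameter $D$, the quantity $\bar{\mathcal V}_\delta(m^*(t))$ stays bounded in $t$; letting $t\to+\infty$ gives $\bar{\mathcal V}_\delta(m_0)\le 4\|\chi\|_\infty$, uniformly in $\delta$. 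Combined with the lower bound and the uniform Lipschitz bound, this yields equiboundedness of $\{\bar{\mathcal V}_\delta\}$ and hence a uniformly convergent subsequence $\bar{\mathcal V}_{\delta_n}\to\chi_0$.

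\textbf{$\chi_0$ is a corrector.} Write \eqref{Vdelta.ddp} at a horizon $h$ as $\bar{\mathcal V}_{\delta_n}(m_0)=\inf_{(m,\alpha)}\int_0^h e^{-\delta_n s}\big(\int_{\T^d}H^*(x,\alpha)dm+\mathcal F(m)+\lambda\big)ds+e^{-\delta_n h}\bar{\mathcal V}_{\delta_n}(m(h))$. For ``$\le$'' in \eqref{subsol.ine}-with-equality, fix any admissible $(m,\alpha)$ on $[0,h]$, use it as a competitor, and let $\delta_n\to0$, using $e^{-\delta_n s}\to1$ uniformly on $[0,h]$ and $\bar{\mathcal V}_{\delta_n}\to\chi_0$ uniformly; then take the infimum over $(m,\alpha)$. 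For ``$\ge$'', take the optimal trajectory $(m_{\delta_n},\alpha_{\delta_n})$ for $\bar{\mathcal V}_{\delta_n}(m_0)$: by Lemma \ref{2estVdelta} the $\alpha_{\delta_n}$ are equibounded and equi-Lipschitz in space and the $m_{\delta_n}$ are equi-$1/2$-Hölder in time, so up to a further subsequence $(m_{\delta_n},\alpha_{\delta_n})\to(\bar m,\bar\alpha)$ with $(\bar m,\bar\alpha)$ admissible and $\bar m(0)=m_0$. Passing to the limit in the dynamic programming identity — using the standard lower semicontinuity of $(m,\alpha)\mapsto\int_0^h\int_{\T^d}H^*(x,\alpha)\,dm\,ds$ under this convergence, the continuity of $\mathcal F$, and $\bar{\mathcal V}_{\delta_n}(m_{\delta_n}(h))\to\chi_0(\bar m(h))$ — gives $\chi_0(m_0)\ge \int_0^h\big(\int_{\T^d}H^*(x,\bar\alpha)d\bar m+\mathcal F(\bar m)\big)ds+\chi_0(\bar m(h))+\lambda h\ge \inf_{(m,\alpha)}(\cdots)+\lambda h$. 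Hence \eqref{subsol.ine} holds as an equality for every $h>0$ and every $m_0$, i.e. $\chi_0$ is a corrector.

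\textbf{Main obstacle.} The only genuinely delicate step is the upper bound, which relies on the existence of a calibrated curve for a corrector; the rest is a combination of Proposition \ref{punto.3}, the a priori estimates of Lemma \ref{2estVdelta}, compactness of $\mathcal P(\T^d)$, and routine lower-semicontinuity. If one prefers to avoid calibrated curves altogether, it suffices to prove the upper bound at one well-chosen point (e.g. an element of the support of the projection of a Mather measure) and then propagate it using the uniform Lipschitz bound.
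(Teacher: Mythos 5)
Your proof is correct and follows essentially the same route as the paper: the lower bound comes from Proposition \ref{punto.3} applied to a fixed corrector, and the upper bound from testing the dynamic programming principle \eqref{Vdelta.ddp} along a curve calibrated for that corrector and integrating by parts, with the uniform Lipschitz bound imported from \cite[Proposition 1.6]{masoero2019}. The only difference is cosmetic: you spell out the passage to the limit in the dynamic programming principle to show $\chi_0$ is a corrector, whereas the paper simply cites \cite[Lemma 1.7]{masoero2019} for that step.
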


\begin{proof}
	The only thing that we have to prove is that $\bar{\mathcal V}_\delta$ is uniformly bounded. Indeed, the rest of the claim was proven in \cite[Proposition 1.6]{masoero2019} and \cite[Lemma 1.7]{masoero2019}. 
	
	We start showing that $\bar{\mathcal{V}}_\delta$ is bounded below. Let $\chi$ be a corrector function and $\bar m\in\mathcal P(\T^d)$ be such that $\max_{m\in\mathcal P(\T^d)}\chi(m)=\chi(\bar m)$. From Proposition \ref{punto.3}, if we fix $m_0\in\mathcal P(\T^d)$ we get
	$$
	\bar{\mathcal{V}}_\delta(m_0)\geq\chi(m_0)-\int_{\mathcal P(\T^d)}\chi(m)\nu_\delta^{m_0}(dm).
	$$
	Then,
	$$
	\bar{\mathcal{V}}_\delta(m_0)\geq\chi(m_0)-\int_{\mathcal P(\T^d)}\max_{m\in\mathcal P(\T^d)}\chi(m)\nu_\delta^{m_0}(dm)
	$$
	$$
	=\chi(m_0)-\chi(\bar m)\geq-K\bm d(m_0,\bar m)\geq -K{\rm diam}(\mathcal P(\T^d))
	$$
	and the Lipschitz constant $K$ does not depend on $\chi$.
	
	 We now focus on the upper bound. We fix $(m,\alpha)$ an optimal trajectory for $\chi(m_0)$ which means that, for any $t>0$, 
	 \begin{align}\label{corrector.ddp}
	 \chi(m_0)-\chi(m(t))=\int_0^t\int_{\T^d}H^*(x,\alpha(s))dm(s)+\mathcal F(m(s))ds +\lambda t.
	 \end{align}

	  To have a lighter notation we introduce
	$$
	\mathcal{L}(s)=\int_{{\mathcal P}(\T^d)}H^*(x,\alpha(s))dm(s)+\mathcal{F}(m(s))
	$$
	and
	$$
	\mathbb{L}(t)=\int_0^t\mathcal L(s)ds.
	$$
	Note that according to \eqref{corrector.ddp} we have
	\begin{align}\label{serv.L}
	\mathbb{L}(t)=\chi(m_0)-\chi(m(t))-t\lambda.
	\end{align}
	Moreover, we recall that $\bar{\mathcal{V_\delta}}$ verifies the dynamical programming principle \eqref{Vdelta.ddp}. Therefore,
	
	\begin{align}\label{}
	\bar{\mathcal{V}}_\delta(m_0)\leq\int_0^Te^{-\delta s}\mathcal L(s)ds+e^{-\delta T}\bar{\mathcal{V}}_\delta(m(T))+\frac{\lambda}{\delta}.
	\end{align}
	Integrating by parts we find
	\begin{align}\label{serv.supine.Vdelta}
	\bar{\mathcal{V}}_\delta(m_0)\leq e^{-\delta T}\mathbb L(T)+\delta\int_0^Te^{-\delta s}\mathbb L(s)ds+e^{-\delta T}\bar{\mathcal{V}}_\delta(m(T))+\frac{\lambda}{\delta}.
	\end{align}
	We now consider each addends on the right hand side to prove that it either converges to zero when $T\rightarrow+\infty$ or it is uniformly bounded. From \eqref{serv.L}, we deduce that
	\begin{align}\label{}
	&e^{-\delta T}\mathbb L(T)=e^{-\delta T}(\chi(m_0)-\chi(m(T))-T\lambda)\xrightarrow[]{T\rightarrow +\infty}0.
	\end{align} 
	Then, there exists a $C>0$ such that
	\begin{align}\label{}
	&\delta\int_0^Te^{-\delta s}\mathbb L(s)ds=\delta\int_0^Te^{-\delta s}(-s\lambda+\chi(m_0)-\chi(m(s)))ds=\\
	&-(1-e^{-\delta T})\frac{\lambda}{\delta}+\delta\int_0^Te^{-\delta s}(\chi(m_0)-\chi(m(s)))ds\leq\\
	&-(1-e^{-\delta T})\frac{\lambda}{\delta}+(1-e^{-\delta T})C\xrightarrow[]{T\rightarrow +\infty}-\frac{\lambda}{\delta}+ C,
	\end{align}
	where in the last inequality we used the boundedness of $\chi$.
	Note that, for fixed $\delta$, also the term $e^{-\delta T}\bar{\mathcal{V}}_\delta(m(T))$ converges to zero when $T\rightarrow +\infty$. Then, plugging these computations into \eqref{serv.supine.Vdelta} and letting $T\rightarrow+\infty$, we get
	\begin{align}\label{}
	\bar{\mathcal{V}}_\delta(m_0)\leq -\frac{\lambda}{\delta}+ C +\frac{\lambda}{\delta}=C.
	\end{align}
\end{proof}

\subsection{Smooth Mather measures}\label{subsec.smooth.mather}
In this section we will work with measures with a smooth density. In this case, we will often identify a measure $m\in\mathcal P(\T^d)$ with its density. 
\begin{defin}\label{defin.smooth.measure}
	We say that $(\mu,p_1) \in \mathcal P(\mathcal P(\T^d))\times\mathcal M(\mathcal P(\T^d)\times E,\R^d)$ is smooth if there exists $C>0$ such that, for any $m\in{\rm supp}\,\mu$,
	\begin{enumerate}
		\item   $\Vert m\Vert_{C^2(\T^d)}\leq C$.
		\item   $\left\Vert dp_1/(dm\otimes\mu)(\cdot,m)\right\Vert_{C^1(\T^d)}\leq C$.
	\end{enumerate}
\end{defin}
\begin{lemma}\label{lemma.smooth.measure}
	Let $(\mu,p_1)$ be a smooth (in the sense Definition \ref{defin.smooth.measure}) closed measure. Then, if $\Phi\in C^0(\mathcal P(\T^d))$,
	\begin{align}\label{}
	\int_{{\mathcal P}(\T^d)} \left(\Delta m(x)+\dive\left(\frac{dp_1}{dm\otimes\mu}(x,m)m(x)\right)\right)\Phi(m)\mu(dm)=0
	\end{align}
\end{lemma}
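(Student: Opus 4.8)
The plan is to first recast the pointwise-in-$x$ identity as an invariance statement for the generator of the Fokker--Planck flow, and then to read the closedness relation \eqref{close.rel} against this generator. Write $b(x,m):=\frac{dp_1}{dm\otimes\mu}(x,m)$ and
$$
G(m,x):=\Delta_x m(x)+\dive_x\big(b(x,m)\,m(x)\big),
$$
so the goal is $\int_{\mathcal P(\T^d)}G(m,x)\Phi(m)\,\mu(dm)=0$ for each fixed $x$. By Definition \ref{defin.smooth.measure} the densities $m$ and the fields $b(\cdot,m)$ are bounded in $C^2(\T^d)$ and $C^1(\T^d)$ uniformly over $m\in\mathrm{supp}\,\mu$; hence $x\mapsto G(m,x)$ is continuous and bounded uniformly in $m$, and so is $x\mapsto\int G(m,x)\Phi(m)\,\mu(dm)$. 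That continuous function vanishes identically precisely when it is annihilated by every $\phi\in C^\infty(\T^d)$. Multiplying by $\phi(x)$, integrating in $x$, using Fubini (all quantities bounded) and integrating by parts on the torus, the claim becomes
$$
\int_{\mathcal P(\T^d)}\Phi(m)\,\mathcal L_\phi(m)\,\mu(dm)=0\qquad\text{for all }\phi\in C^\infty(\T^d),\quad
\mathcal L_\phi(m):=\int_{\T^d}\big(\Delta\phi(x)-\nabla\phi(x)\cdot b(x,m)\big)\,m(x)\,dx .
$$
The key remark is that $\mathcal L_\phi(m)$ is exactly $\mathcal L\Psi_\phi(m)$, the Fokker--Planck generator applied to the linear functional $\Psi_\phi(m):=\int_{\T^d}\phi\,dm$, since $D_m\Psi_\phi=\nabla\phi$ and $\dive_y D_m\Psi_\phi=\Delta\phi$.

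\textbf{Using closedness.} Performing the same double integration by parts in $y$ in \eqref{close.rel} shows that $(\mu,p_1)$ closed is equivalent to $\int_{\mathcal P(\T^d)}\mathcal L\Psi\,d\mu=0$ for every $\Psi\in C^{1,1}(\mathcal P(\T^d))$, where $\mathcal L\Psi(m)=\int_{\T^d}\frac{\delta\Psi}{\delta m}(m,y)\,G(m,y)\,dy$. Taking $\Psi=\Psi_\phi$ already settles the case $\Phi\equiv1$: it gives $\Delta\bar m+\dive\bar\jmath=0$ on $\T^d$ for the spatial marginals $\bar m(x)=\int m(x)\mu(dm)$ and $\bar\jmath(x)=\int b(x,m)m(x)\mu(dm)$ (here one uses that $\int_{\T^d}G(m,\cdot)\,dy=0$ to discard the normalization constant of $\frac{\delta\Psi_\phi}{\delta m}$). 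For a general smooth $\Phi$ I would exploit that $\mathcal L$ is a derivation, $\mathcal L(\Phi\Psi_\phi)=\Phi\,\mathcal L\Psi_\phi+\Psi_\phi\,\mathcal L\Phi$, which follows from the Leibniz rule $\frac{\delta(\Phi\Psi_\phi)}{\delta m}=\Psi_\phi\frac{\delta\Phi}{\delta m}+\Phi\frac{\delta\Psi_\phi}{\delta m}$. Applying the closedness identity to $\Psi=\Phi\Psi_\phi$ then produces
$$
\int_{\mathcal P(\T^d)}\Phi(m)\,\mathcal L_\phi(m)\,\mu(dm)=-\int_{\mathcal P(\T^d)}\Psi_\phi(m)\,\mathcal L\Phi(m)\,\mu(dm).
$$

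\textbf{Main obstacle.} The heart of the matter, and the step where the smoothness of Definition \ref{defin.smooth.measure} must be used essentially, is to show that the remaining term on the right vanishes; closedness alone controls $\int\mathcal L\Psi\,d\mu$ but not the weighted quantity $\int\Psi_\phi\,\mathcal L\Phi\,d\mu$, so this is exactly the point that is free in \cite{davini2016convergence} through mollification of the test object but unavailable here. The approach I would follow is to localize $\phi$ at $x$ by a mollifier $\rho_\varepsilon\to\delta_x$ and to represent $\mathcal L_\phi$ and $\mathcal L\Phi$ through the uniformly smooth densities and fields, so that the uniform $C^2$/$C^1$ bounds allow one to pass to the limit $\varepsilon\to0$ and identify the surviving contribution with $G(m,x)$ while the spurious weighted term is controlled by the same bounds; this is the genuinely delicate part of the argument and the place where I expect the real work to lie.

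\textbf{From smooth to continuous $\Phi$.} Once the identity is established for $\Phi$ of class $C^{1,1}$ (equivalently for the cylinder functions, which are dense), the extension to an arbitrary $\Phi\in C^0(\mathcal P(\T^d))$ is routine: by \cite{mou2019weak} one can approximate $\Phi$ uniformly by smooth functions $\Phi_n$, and since the uniform smoothness bounds make $G(\cdot,x)$ bounded, hence $G(\cdot,x)\in L^1(\mu)$, one has $\int G(\cdot,x)\Phi_n\,d\mu\to\int G(\cdot,x)\Phi\,d\mu$, so the identity transfers to all continuous $\Phi$ and the proof is complete.
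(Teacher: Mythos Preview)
Your proposal has a genuine gap at the step you yourself label the ``Main obstacle'': after the Leibniz identity you are left with the weighted term
\[
\int_{\mathcal P(\T^d)}\Psi_\phi(m)\,\mathcal L\Phi(m)\,\mu(dm),
\]
and you never actually show it vanishes. Closedness controls $\int\mathcal L\Psi\,d\mu$ for a single test function $\Psi$, not products of two test functions against the generator separately, so the cross term you produce is not covered by \eqref{close.rel}. Your sketched remedy (localize $\phi$ by a mollifier and appeal to the uniform $C^2/C^1$ bounds) does not explain why the spurious term disappears in the limit; there is no cancellation mechanism proposed. As it stands the argument is incomplete precisely at the place where the smoothness hypothesis must do the work.

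The paper's route avoids this difficulty altogether by never introducing the auxiliary functional $\Psi_\phi$ or the Leibniz splitting. It proceeds in one stroke: approximate the continuous $\Phi$ by $\Phi_n\in C^{1,1}(\mathcal P(\T^d))$ via \cite{mou2019weak}, apply the closedness relation \eqref{close.rel} \emph{directly} to $\Phi_n$, and then integrate by parts in the spatial variable $y$. This last step is exactly where the smoothness of Definition \ref{defin.smooth.measure} enters --- the $C^2$ bound on the densities $m$ and the $C^1$ bound on $dp_1/(dm\otimes\mu)$ are what make the integration by parts legitimate and produce the factor $\Delta m(x)+\dive\bigl(\tfrac{dp_1}{dm\otimes\mu}(x,m)m(x)\bigr)$. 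The identity is thus obtained for $\Phi_n$ with no leftover term. Your final paragraph (passing from $\Phi_n$ to $\Phi$ by uniform convergence, using the bounded integrand) is correct and is exactly how the paper closes the argument, but the point is that one arrives there without any detour: the whole proof is three lines, not a product rule plus an obstacle.
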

\begin{proof}
	First of all, note that the left hand side of the above relation is integrable because of the bounds on the space derivatives of $m$ and $p_1/dm\otimes\mu$.
	
	 According to \cite[Theorem 2.2]{mou2019weak}, we can pick a sequence $\Phi_n\in C^{1,1}$ which converges uniformly to $\Phi$. Then, using that $(\mu,p_1)$ is closed and smooth we get
	\begin{align}\label{}
	0&=-\int_{{\mathcal P}(\T^d)\times \T^d} D_m\Phi_n(m,y) \cdot p_1(dm,dy) +\int_{{\mathcal P}(\T^d)\times \T^d} \dive_y D_m\Phi_n(m,y) m(dy)\mu(dm)\\
	&=-\int_{{\mathcal P}(\T^d)\times \T^d} D_m\Phi_n(m,y) \cdot \frac{dp_1}{dm\otimes\mu}(x,m)dm(x)\mu(dm) +\int_{{\mathcal P}(\T^d)\times \T^d} \dive_y D_m\Phi_n(m,y) m(dy)\mu(dm)\\
	&=\int_{{\mathcal P}(\T^d)\times \T^d}\left(\Delta m(x)+\dive\left(\frac{dp_1}{dm\otimes\mu}(x,m)m(x)\right)\right)\Phi_n(m)\mu(dm),
	\end{align}
	that is
	\begin{align}\label{}
	0=\int_{{\mathcal P}(\T^d)\times \T^d}\left(\Delta m(x)+\dive\left(\frac{dp_1}{dm\otimes\mu}(x,m)m(x)\right)\right)\Phi_n(m)\mu(dm)
	\end{align}
	As $\Phi_n$ converges uniformly to $\Phi$ we just need to pass to the limit $n\rightarrow+\infty$.
\end{proof}

\begin{prop}\label{Vdelta.in.Mdelta}
	For any $\delta>0$ and any smooth Mather measure $(\mu,p_1)$, we have
	\begin{align}\label{}
	\int_{{\mathcal P}(\T^d)}\bar{\mathcal V}_\delta(m)\mu(dm)\leq0
	\end{align}
\end{prop}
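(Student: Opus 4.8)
The plan is to exploit the dynamic programming principle \eqref{Vdelta.ddp} satisfied by $\bar{\mathcal V}_\delta$ together with the closedness relation for the smooth Mather measure $(\mu,p_1)$, using Lemma \ref{lemma.smooth.measure} to transfer the closedness relation from test functions $\Phi\in C^{1,1}$ to $\Phi=\bar{\mathcal V}_\delta$, which is merely Lipschitz. The key observation is that, because $\mu$ is supported on measures with smooth densities and $dp_1/(dm\otimes\mu)$ is $C^1$ in space, the formal object
\[
\Delta m(x)+\dive\!\left(\tfrac{dp_1}{dm\otimes\mu}(x,m)\,m(x)\right)
\]
is a genuine (bounded) measure on $\T^d$ for $\mu$-a.e.\ $m$, and Lemma \ref{lemma.smooth.measure} tells us it integrates to zero against any \emph{continuous} $\Phi$ on $\mathcal P(\T^d)$. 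This is exactly the regularity that replaces mollification in the finite-dimensional argument.

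First I would fix, for each $m$ in the support of $\mu$, the optimal trajectory $(m^m_\delta,\alpha^m_\delta)$ for $\bar{\mathcal V}_\delta(m)$ starting at $m$, and write the DPP \eqref{Vdelta.ddp} for a small time step $h>0$:
\[
\bar{\mathcal V}_\delta(m)\le\int_0^h e^{-\delta s}\Big(\inte H^*(x,\alpha^m_\delta(s))\,dm^m_\delta(s)+\mathcal F(m^m_\delta(s))\Big)ds+e^{-\delta h}\bar{\mathcal V}_\delta(m^m_\delta(h))+\tfrac\lambda\delta.
\]
Dividing by $h$ and letting $h\to0$, and using the Fokker--Planck equation together with the fact that along a smooth-density trajectory $\alpha^m_\delta=D_pH(x,Du_\delta)$ is $C^1$, one obtains the infinitesimal inequality: for the ``vector field'' $\alpha^m_\delta(0)$ attached to $m$,
\[
-\inte\Delta m\,(x)\cdot(\text{something})\ \ldots
\]
more precisely, one compares $\bar{\mathcal V}_\delta(m)$ along the trajectory with the choice of velocity field $\beta(y)=dp_1/(dm\otimes\mu)(y,m)$ prescribed by the Mather measure: since $\bar{\mathcal V}_\delta$ satisfies the DPP, for \emph{any} admissible $(m,\alpha)$ one has the subsolution-type inequality, and integrating it against $\mu$ yields
\[
\int_{\mPT}\Big[\inte H^*\big(y,\tfrac{dp_1}{dm\otimes\mu}(y,m)\big)m(dy)+\mathcal F(m)+\lambda\Big]\mu(dm)\ \ge\ \int_{\mPT}\!\!\Big(\text{linear-in-}\bar{\mathcal V}_\delta\text{ term}\Big)\mu(dm)-\delta\!\int_{\mPT}\bar{\mathcal V}_\delta(m)\mu(dm).
\]
The left-hand bracket integrates to $-\lambda+\lambda=0$ by \eqref{inf.equal.lambda} (adding back $\lambda$), and the ``linear-in-$\bar{\mathcal V}_\delta$'' term is precisely the integral appearing in Lemma \ref{lemma.smooth.measure} applied with $\Phi=\bar{\mathcal V}_\delta$, hence vanishes. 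What remains is $0\ge-\delta\int_{\mPT}\bar{\mathcal V}_\delta(m)\mu(dm)$, i.e.\ the claim.

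The main obstacle is making the heuristic ``divide by $h$ and pass to the limit'' rigorous without differentiating $\bar{\mathcal V}_\delta$, since $\bar{\mathcal V}_\delta$ need not be $C^1$ on $\mathcal P(\T^d)$ — this is the whole point of the paper. The way around it is to keep $h>0$ throughout: write the DPP inequality for $\bar{\mathcal V}_\delta$ with the \emph{competitor} trajectory generated by the Mather velocity field $\beta(\cdot)=dp_1/(dm\otimes\mu)(\cdot,m)$ (which is admissible precisely because of the $C^1$ and $C^2$ bounds in Definition \ref{defin.smooth.measure}, so the Fokker--Planck equation has a classical solution on $[0,h]$), integrate the resulting inequality against $\mu(dm)$, and only then use Lemma \ref{lemma.smooth.measure} to kill the term $\int_{\mPT}\big(\bar{\mathcal V}_\delta(m_h)-\bar{\mathcal V}_\delta(m)\big)\mu(dm)/h$ in the limit $h\to0$ — the point being that this difference quotient, after integrating by parts in the $\T^d$ variable using the closedness/Lemma \ref{lemma.smooth.measure}, converges to $0$ even though no pointwise derivative of $\bar{\mathcal V}_\delta$ exists. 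A secondary technical point is uniform control, as $h\to0$, of the remainder from $e^{-\delta h}\approx1-\delta h$ and from the Lipschitz-in-time bound $\bm d(m_h,m)\le Ch^{1/2}$ of Lemma \ref{2estVdelta} applied to the competitor; these are handled exactly as in the proof of Proposition \ref{punto.3}, giving an $O(h^{1/2})$ error that disappears in the limit.
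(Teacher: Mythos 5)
Your overall strategy --- integrate a subsolution-type inequality for $\bar{\mathcal V}_\delta$ against $\mu$, then use closedness to kill the linear term and \eqref{inf.equal.lambda} to evaluate the Lagrangian term --- is the same as the paper's, and your treatment of the running-cost term (the competitor trajectory driven by $\beta_m(\cdot)=\frac{dp_1}{dm\otimes\mu}(\cdot,m)$, admissible thanks to the bounds in Definition \ref{defin.smooth.measure}) is fine. The gap is exactly at the step you flag as ``the main obstacle'': you never actually show that
\begin{equation}
\frac1h\int_{\mPT}\bigl(\bar{\mathcal V}_\delta(m_h)-\bar{\mathcal V}_\delta(m)\bigr)\mu(dm)\ \longrightarrow\ 0 .
\end{equation}
Lemma \ref{lemma.smooth.measure} cannot do this for you: it is a statement about the distribution $\Delta m+\dive(\beta_m m)$ tested against continuous functions of $m$, i.e.\ about the \emph{infinitesimal} generator; the finite-$h$ difference quotient of the merely Lipschitz function $\bar{\mathcal V}_\delta$ is not of that form for any $h>0$, and it only becomes of that form in the limit if $\bar{\mathcal V}_\delta$ is differentiable along the flow --- which is precisely what is unavailable. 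Closedness of $(\mu,p_1)$ does not give invariance of $\mu$ under the finite-time flow $m\mapsto m_h$ either (the drift is frozen at the initial point, and closed measures are only infinitesimally invariant against $C^{1,1}$ test functions), so the integral does not vanish identically. The only unconditional bound is $|\bar{\mathcal V}_\delta(m_h)-\bar{\mathcal V}_\delta(m)|\le K\,\mathbf d(m_h,m)\le KCh^{1/2}$, which after dividing by $h$ gives $O(h^{-1/2})$ --- divergent. Trying to repair this by writing $\bar{\mathcal V}_\delta=\Phi_n+r_n$ with $\Phi_n\in C^{1,1}$ from \cite[Theorem 2.2]{mou2019weak} produces an error $2\|r_n\|_\infty/h$ that must be balanced against a rate of convergence of the difference quotients of $\Phi_n$ that degenerates with $n$ in an uncontrolled way; this is exactly the ``no quantitative mollification on $\mathcal P(\T^d)$'' obstruction the introduction warns about, so the diagonal argument does not close.

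The paper avoids differentiating $\bar{\mathcal V}_\delta$ along any trajectory. For each $\bar m\in{\rm supp}\,\mu$ it invokes a smooth function $\Phi^{\bar m}\in C^{1,1}(\mathcal P(\T^d))$ touching $\bar{\mathcal V}_\delta$ from above at $\bar m$ and satisfying the pointwise discounted Hamilton--Jacobi subsolution inequality (this is the construction of \cite[Lemma 6.3]{cardaliaguet2019weak}, i.e.\ the viscosity-subsolution property of $\bar{\mathcal V}_\delta$); the Fenchel inequality $H(y,D_m\Phi^{\bar m})\ge D_m\Phi^{\bar m}\cdot\frac{dp_1}{dm\otimes\mu}-H^*\bigl(y,\frac{dp_1}{dm\otimes\mu}\bigr)$ introduces the Mather density, the smoothness of $\Phi^{\bar m}$ and of $m\in{\rm supp}\,\mu$ allows the integration by parts, the touching condition replaces $\Phi^{\bar m}$ by $\bar{\mathcal V}_\delta$ in the zeroth-order term, and only then are Lemma \ref{lemma.smooth.measure} and \eqref{inf.equal.lambda} applied. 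That test-function step is the missing ingredient in your argument; without it (or some substitute carrying derivative information for $\bar{\mathcal V}_\delta$ at each point of ${\rm supp}\,\mu$), the difference-quotient term cannot be controlled.
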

\begin{proof}
	For any $\bar m\in\mathcal P(\T^d)$ there exists a smooth function $\Phi^{\bar m}\in C^{1,1}$ such that $\Phi^{\bar m}\in C^{1,1}(\mathcal P(\T^d))$, $\Phi^{\bar m}(m)\geq\bar{\mathcal V}_\delta(m)$ with an equality only for $m=\bar m$ and such that
	\begin{align}\label{}
	\delta\bar{\mathcal V}_\delta(\bar m)\leq\int_{\T^d}\dive_y D_m\Phi^{\bar m}(\bar m,y) \bar m(dy)-\int_{\T^d} H(y,D_m\Phi^{\bar m}(\bar m,y))\bar m(dy)+\mathcal F(\bar m)+\lambda.
	\end{align}
	For the construction of such a function one can look at \cite[Lemma 6.3]{cardaliaguet2019weak}.
	
	By convexity of $H$ with respect to the second variable, we get
	\begin{align}\label{}
	\delta\bar{\mathcal V}_\delta(\bar m)&\leq\int_{\T^d}\dive_y D_m\Phi^{\bar m}(\bar m,y)\bar m(dy)-\int_{\T^d}D_m\Phi^{\bar m}(\bar m,y)\cdot\frac{dp_1}{dm\otimes\mu}(\bar m,y) \bar m(dy)\\
	&\qquad+\int_{\T^d} H^*\left(y,\frac{dp_1}{dm\otimes\mu}(\bar m,y)\right)\bar m(dy)+\mathcal F(\bar m)+\lambda.
	\end{align}
	If $\bar m\in{supp}\,\mu$ then we can integrate by parts and we get
	\begin{align}\label{}
	\delta\bar{\mathcal V}_\delta(\bar m)\leq&\int_{\T^d}\left(\Delta \bar m(x)+\dive\left(\frac{dp_1}{dm\otimes\mu}(x,\bar m)\bar m(x)\right)\right)\Phi^{\bar m}(\bar m)\\
	&\qquad+\int_{\T^d} H^*\left(y,\frac{dp_1}{dm\otimes\mu}(\bar m,y)\right)\bar m(dy)+\mathcal F(\bar m)+\lambda\\
	&=\int_{\T^d}\left(\Delta \bar m(x)+\dive\left(\frac{dp_1}{dm\otimes\mu}(x,\bar m)\bar m(x)\right)\right)\bar{\mathcal V}_\delta(\bar m)\\
	&\qquad+\int_{\T^d} H^*\left(y,\frac{dp_1}{dm\otimes\mu}(\bar m,y)\right)\bar m(dy)+\mathcal F(\bar m)+\lambda
	\end{align}
	
	If we integrate against $\mu$ we get from Lemma \ref{lemma.smooth.measure} that 
	\begin{align}\label{}
	\int_{{\mathcal P}(\T^d)}\int_{\T^d}\left(\Delta \bar m(x)+\dive\left(\frac{dp_1}{dm\otimes\mu}(x,\bar m)\bar m(x)\right)\right)\bar{\mathcal V}_\delta(\bar m)\mu(d\bar m)=0.
	\end{align}
	Moreover, as $(\mu,p_1)$ is a Mather measure we also have
	\begin{align}\label{}
	\int_{{\mathcal P}(\T^d)}\int_{\T^d} H^*\left(y,\frac{dp_1}{dm\otimes\mu}(\bar m,y)\right)\bar m(dy)+\mathcal F(\bar m)\mu(d\bar m)=-\lambda.
	\end{align}
	Therefore,
	\begin{align}\label{}
	\delta\int_{{\mathcal P}(\T^d)}\bar{\mathcal V}_\delta(\bar m)\mu(d\bar m)\leq 0.
	\end{align}
\end{proof}
\begin{defin}
We say that $(\mu,p_1)\in \mathcal P(\mathcal P(\T^d))\times\mathcal M(\mathcal P(\T^d)\times E,\R^d)$ belongs to $\mathcal M_{\mathcal V}$ if $(\mu,p_1)$ is smooth in the sense of Definition \ref{defin.smooth.measure} and if it is the limit of $\nu_\delta^{m_0}$ (in the sense of Proposition \ref{nu.mather}) for a certain $m_0$ and along a subsequence $\delta\rightarrow 0$.
\end{defin}
Note that, according to Proposition \ref{nu.mather}, $\mathcal M_{\mathcal V}$ is a subset of the set of Mather measures. Moreover, we will say with an abuse of terminology that a measure $\nu\in\mathcal P(\mathcal P(\T^d)\times E)$ belongs to $\mathcal M_{\mathcal V}$ is the couple $(\mu,p_1)$ defined as in Proposition \ref{nu.mather} does.
\begin{lemma}\label{lemma.nu.Mdelta.lim}
For any $m_0\in\mathcal P(\T^d)$ with a $C^\infty(\T^d)$ density, there exists $\delta_n\rightarrow0$ such that $\nu_{\delta_n}^{m_0}\rightarrow\nu^{m_0}$ and $\nu^{m_0}\in\mathcal M_{\mathcal V}$.
\end{lemma}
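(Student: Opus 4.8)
The plan is to first extract the limit $\nu^{m_0}$ by a compactness argument, and then to show that the pair $(\mu,p_1)$ it induces satisfies the two bounds of Definition~\ref{defin.smooth.measure}; since Proposition~\ref{nu.mather} already guarantees that $(\mu,p_1)$ is a Mather measure, this is exactly what is needed to conclude $\nu^{m_0}\in\mathcal M_{\mathcal V}$. Concretely: for each $\delta>0$ fix an optimal trajectory $(m_\delta,\alpha_\delta)$ for $\bar{\mathcal V}_\delta(m_0)$ and form $\nu_\delta^{m_0}$ as in \eqref{defin.numdelta}. As recalled just before Proposition~\ref{nu.mather}, $\mathcal P(\T^d)\times E$ is compact for the uniform topology, hence $\mathcal P(\mathcal P(\T^d)\times E)$ is weakly compact, and one may choose $\delta_n\to 0$ with $\nu_{\delta_n}^{m_0}\to\nu^{m_0}$; write $\bar\nu:=\nu^{m_0}$, let $\mu$ be its first marginal and define $p_1$ through \eqref{p1.defin}, so that \eqref{p1dev.defin} holds. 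The only genuine work is then to promote the a priori estimates of Lemma~\ref{2estVdelta} to uniform $C^{2}$, resp.\ $C^{1}$, bounds on $m_\delta$ and $\alpha_\delta$, and this is where the hypothesis $m_0\in C^\infty(\T^d)$ is used.

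\emph{Uniform regularity.} Recall $\alpha_\delta(t,x)=D_pH(x,Du_\delta(t,x))$ with $(u_\delta,m_\delta)$ solving \eqref{delta.system}, and that Lemma~\ref{2estVdelta} provides $\|Du_\delta\|_\infty+\|D^2u_\delta\|_\infty+\|\partial_t u_\delta\|_\infty\le C_1$ uniformly in $\delta$ on all of $[0,+\infty)\times\T^d$; in particular the drift $\alpha_\delta$ is bounded and uniformly Lipschitz in $x$. I would then run a coupled parabolic bootstrap. Viewing the Fokker--Planck equation in \eqref{delta.system} as $\partial_t m_\delta=\Delta m_\delta+\dive(m_\delta\alpha_\delta)$ with uniformly bounded drift, the standard $L^\infty$ estimate for the continuity-with-diffusion equation gives $\|m_\delta(t)\|_{L^\infty(\T^d)}\le C$ for all $t\ge 0$ and all $\delta>0$ (for $t$ bounded away from $0$ by parabolic smoothing, for small $t$ because $m_0\in C^\infty$). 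De Giorgi--Nash--Moser then upgrades this to a uniform parabolic-Hölder bound, and, alternating interior parabolic Schauder estimates for the Hamilton--Jacobi equation (where $Du_\delta$ is already controlled and the coefficients $D_pH(x,Du_\delta)$, $F(\cdot,m_\delta)$ carry uniform Hölder norms thanks to the assumptions on $H$ and $\mathcal F$) with Schauder estimates for the Fokker--Planck equation — applied on cylinders $[t,t+2]\times\T^d$, translated in $t$, and matched near $t=0$ with the smooth datum $m_0$ — a finite number of iterations yields $\beta\in(0,1)$ and $C>0$, depending only on $C_1$, the data and a fixed norm of $m_0$, such that
\[
\|m_\delta(t)\|_{C^{2,\beta}(\T^d)}+\|\alpha_\delta(t)\|_{C^{1,\beta}(\T^d)}\le C\qquad\text{for all }t\ge 0\ \text{and all }\delta>0.
\]
Uniformity in $\delta$ is granted because every ingredient — the bounds of Lemma~\ref{2estVdelta}, together with $\delta\|u_\delta\|_\infty$ and $\delta\|Du_\delta\|_\infty$ — is bounded uniformly in $\delta$.

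\emph{Passage to the limit.} By construction $\mu$ is the weak limit of the measures $\delta_n\int_0^{+\infty}e^{-\delta_n s}\delta_{m_{\delta_n}(s)}\,ds$, so a diagonal argument shows that every $m\in\mathrm{supp}\,\mu$ is a $\mathbf d$-limit of measures $m_{\delta_{n_k}}(s_k)$ whose densities are uniformly bounded in $C^{2,\beta}(\T^d)$ by the previous step; hence, by Arzel\`a--Ascoli, $m$ itself has a density with $\|m\|_{C^2(\T^d)}\le C$, which is condition (1) of Definition~\ref{defin.smooth.measure}. For condition (2), disintegrate $\bar\nu=\bar\nu_m(d\alpha)\,\mu(dm)$; by \eqref{p1dev.defin}, $\frac{dp_1}{dm\otimes\mu}(\cdot,m)=\int_E\alpha(\cdot)\,\bar\nu_m(d\alpha)$ is an average of elements of $E$ which, being uniform limits of the $\alpha_\delta(s)$, all satisfy $\|\alpha\|_{C^{1,\beta}(\T^d)}\le C$ by the previous step; differentiating under the integral sign, this bound passes to the average, so $\|\frac{dp_1}{dm\otimes\mu}(\cdot,m)\|_{C^1(\T^d)}\le C$. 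Thus $(\mu,p_1)$ is smooth in the sense of Definition~\ref{defin.smooth.measure}, and since it is a Mather measure by Proposition~\ref{nu.mather} and is obtained as the limit of $\nu_{\delta_n}^{m_0}$ along $\delta_n\to 0$, we conclude $\nu^{m_0}\in\mathcal M_{\mathcal V}$.

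\emph{Main obstacle.} The delicate part is the uniform regularity step: obtaining the $C^{2,\beta}/C^{1,\beta}$ estimates uniformly in \emph{both} $t\in[0,+\infty)$ and $\delta$. Uniformity in time forces one to combine interior parabolic estimates (translated along the time axis) with the regularity coming from the smooth initial datum $m_0$, and one must verify that the coefficients of the linearized equations carry enough \emph{parabolic} Hölder regularity — in particular in the time variable — to run Schauder, which is why the De Giorgi--Nash--Moser step must precede the bootstrap. A secondary point is that Definition~\ref{defin.smooth.measure} requires genuine $C^1$ control on the density of $p_1$, one derivative more than the $W^{1,\infty}$ bound on $\alpha_\delta$ supplied directly by Lemma~\ref{2estVdelta}, so at least one extra round of the bootstrap is unavoidable.
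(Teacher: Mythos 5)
Your proof is correct and follows essentially the same route as the paper: uniform bounds on the drift from Lemma \ref{2estVdelta}, parabolic regularity for the Fokker--Planck equation with smooth initial datum $m_0$ to get uniform $C^2$/$C^1$ control on $\{(m_\delta(t),\alpha_\delta(t))\}_{t\ge0}$, and then the inclusion of $\mathrm{supp}\,\nu^{m_0}$ in the Kuratowski upper limit of the supports to pass the bounds to the limit. The paper compresses your "uniform regularity" step into the phrase "by standard parabolic estimates"; your observation that one genuinely needs one more derivative on $\alpha_\delta$ than the $W^{1,\infty}$ bound of Lemma \ref{2estVdelta} provides, and hence an extra bootstrap round, is a point the paper glosses over.
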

\begin{proof}
	We know from Lemma \ref{2estVdelta} that, if $(m_\delta,\alpha_\delta)$ is a minimizer of $\bar{\mathcal{V}}_\delta$ then $\alpha_\delta$ is uniformly bounded, with respect to $\delta$, in $C^{1,1}([0,+\infty]\times\T^d)$. Therefore, $m_\delta$ solves the Fokker-Plank equation \eqref{fokker.plank} with a smooth drift. If, moreover, the initial condition $m_0$ is smooth, by standard parabolic estimates, we have that $m_\delta$ is uniformly bounded in $C^{1,2}([0,+\infty]\times\T^d)$. Then, as ${\rm supp}\,\nu_\delta^{m_0}=\{(m_\delta(t),\alpha_\delta (t))\}_{t\geq 0}$, there exists a constant $C>0$ such that for any $(m,\alpha)\in{\rm supp}\,\nu_\delta^{m_0}$, $\Vert(m,\alpha)\Vert_{C^2(\T^d)\times C^1(\T^d,\R^d)}\leq C$.
	
	Let now $\delta_n$ be a sequence such that $\nu_{\delta_n}^{m_0}\rightarrow\nu^{m_0}$. Given that ${\rm supp}\,\nu^{m_0}\subset\limsup_{\delta\rightarrow 0}{\rm supp}\,\nu_{\delta_n}^{m_0}$, the bounds that we have discussed ensure that the points of the support of $\nu^{m_0}$ are smooth in the sense of Definition \ref{defin.smooth.measure}.
\end{proof}

\subsection{Conclusion}\label{subsec.conclusion.delta}

We define ${\mathcal S}^-$ the set of subsolution $\Psi$ of \eqref{critical.HJ} such that for any $(\mu,p_1)\in\mathcal M_{\mathcal V}$ we have
\begin{align}\label{barS.defin}
\int_{{\mathcal P}(\T^d)}\Psi(m)\mu(dm)\leq0.
\end{align}

We set
\begin{align}\label{}
\bar\chi(m)=\sup_{\chi\in{\mathcal S}^-}\chi(m).
\end{align}

To give sense to the terms that appear in the above relation we need first to prove the following lemma.
\begin{lemma}
	The family of function $\mathcal S^-$ is not empty and uniformly bounded from above.
\end{lemma}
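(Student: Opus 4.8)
\emph{Non-emptiness.} The plan is to exhibit one explicit element of $\mathcal S^-$. By Lemma \ref{bound.barV} there is at least one corrector function, call it $\chi_0$, and by Definition \ref{def.HJ.subsol} every corrector is in particular a subsolution of \eqref{critical.HJ}. Adding a constant to a subsolution preserves the inequality \eqref{subsol.ine}, because the additive constant factors out of the infimum and cancels on both sides; hence $\tilde\chi_0:=\chi_0-\max_{\mathcal P(\T^d)}\chi_0$ (the maximum is attained, $\chi_0$ being continuous on the compact space $\mathcal P(\T^d)$) is again a subsolution, and now $\tilde\chi_0\le 0$ everywhere. Therefore $\int_{\mathcal P(\T^d)}\tilde\chi_0(m)\,\mu(dm)\le 0$ for every probability measure $\mu$, and in particular for every $(\mu,p_1)\in\mathcal M_{\mathcal V}$, so $\tilde\chi_0\in\mathcal S^-$. (Alternatively one can take $\chi_0$ itself: by Proposition \ref{Vdelta.in.Mdelta}, $\int\bar{\mathcal V}_\delta\,d\mu\le 0$ for every smooth Mather measure, hence for every $(\mu,p_1)\in\mathcal M_{\mathcal V}$, and passing to the limit along the subsequence of Lemma \ref{bound.barV} gives $\int\chi_0\,d\mu\le 0$.)

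\emph{Uniform bound from above.} Here I would use two ingredients. First, $\mathcal M_{\mathcal V}\neq\emptyset$: by Lemma \ref{lemma.nu.Mdelta.lim}, choosing any $m_0$ with a $C^\infty(\T^d)$ density and passing to the limit along a subsequence of $\nu_\delta^{m_0}$ produces a couple $(\mu_0,p_1)\in\mathcal M_{\mathcal V}$. Second, subsolutions of \eqref{critical.HJ} are equi-Lipschitz: there is $K>0$, independent of the subsolution, such that every subsolution $\chi$ satisfies $|\chi(m)-\chi(m')|\le K\,\mathbf d(m,m')$. This is proved exactly as the uniform Lipschitz estimate for $\mathcal V_\delta$ in \cite[Proposition 1.6]{masoero2019}: given $m,m'$, connect them by an admissible trajectory $(\bar m,\bar\alpha)$ of \eqref{fokker.plank} on a short interval $[0,h]$ with $\bar m(0)=m$, $\bar m(h)=m'$, whose running cost is controlled in terms of $\mathbf d(m,m')$ and $h$; plugging this competitor into \eqref{subsol.ine} gives $\chi(m)-\chi(m')\le C(h+\mathbf d(m,m'))+\lambda h$, and optimizing in $h$ and exchanging the roles of $m$ and $m'$ yields the claim. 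A subsolution satisfies the same one-sided dynamic programming inequality as $\mathcal V_\delta$, so the argument applies verbatim; in particular every subsolution has oscillation at most $K\,{\rm diam}(\mathcal P(\T^d))<\infty$.

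\emph{Conclusion and main difficulty.} With these in hand I would conclude as in the lower bound of Lemma \ref{bound.barV}. Fix $\Psi\in\mathcal S^-$. Since $(\mu_0,p_1)\in\mathcal M_{\mathcal V}$ we have $\int_{\mathcal P(\T^d)}\Psi\,d\mu_0\le 0$, and as $\mu_0$ is a probability measure and $\Psi$ is continuous, $\min_{m\in{\rm supp}\,\mu_0}\Psi(m)\le\int\Psi\,d\mu_0\le 0$, so there is $\bar m\in{\rm supp}\,\mu_0$ with $\Psi(\bar m)\le 0$. The equi-Lipschitz bound then gives, for every $m\in\mathcal P(\T^d)$,
\[
\Psi(m)\le\Psi(\bar m)+K\,\mathbf d(m,\bar m)\le K\,{\rm diam}(\mathcal P(\T^d))=:M,
\]
with $M$ independent of $\Psi$, which is the asserted uniform bound. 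The genuinely non-trivial point — and the one I expect to be the main obstacle — is the equi-Lipschitz (or, what would already suffice here, the uniform equi-continuity) estimate for subsolutions: it requires producing, for any pair of measures, a competitor for the Fokker–Planck dynamics \eqref{fokker.plank} whose cost is controlled by their Wasserstein distance, which is handled precisely as in the regularity estimates of \cite{masoero2019,cardaliaguet2019weak}.
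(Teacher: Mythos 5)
Your proof is correct and follows essentially the same route as the paper: shift a corrector down by a constant to get a nonpositive element of $\mathcal S^-$, then combine the equi-Lipschitz property of subsolutions (as in \cite{masoero2019}) with the constraint $\int\chi\,d\mu\le 0$ against some $(\mu,p_1)\in\mathcal M_{\mathcal V}$ to force a point where $\chi\le 0$ and hence a uniform upper bound. Your explicit observation that $\mathcal M_{\mathcal V}\neq\emptyset$ (via Lemma \ref{lemma.nu.Mdelta.lim}) is genuinely needed for the upper bound — otherwise the defining condition of $\mathcal S^-$ is vacuous and the supremum is $+\infty$ — is a point the paper leaves implicit, and it is good that you made it.
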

\begin{proof}
	Let $\chi$ be a corrector function and $C>0$ be such that $\chi-C<0$. If we set $\Psi=\chi-C$, then $\Psi$ is a corrector function and for any measure $(\mu,p_1)\in\mathcal M_{\mathcal V}$
	$$
	\int_{{\mathcal P}(\T^d)}\Psi(m)\mu(dm)\leq0.
	$$
	Therefore, $\mathcal S^-$ is not empty.	Following the structure of \cite[Theorem 1.5]{masoero2019}, one can easily prove that the set of subsolution of \eqref{critical.HJ} is uniformly Lipschitz continuous. Then, if we fix a $(\mu,p_1)\in\mathcal M_{\mathcal V}$ and a subsolution $\chi\in\mathcal S^-$, we have
	\begin{align}\label{}
	\min_{\nu\in\mathcal P(\T^d)}\chi(\nu)=\int_{{\mathcal P}(\T^d)}\min_{\nu\in\mathcal P(\T^d)}\chi(\nu)\mu(dm)\leq\int_{{\mathcal P}(\T^d)}\chi(m)\mu(dm)\leq 0.
	\end{align}
	If we use the Lipschitz continuity of $\chi$ we get that, for any $m\in\mathcal{P}(\T^d)$,
	\begin{align}\label{}
	\chi(m)\leq \min_{\nu\in\mathcal P(\T^d)}\chi(\nu)+ K{\rm diam}(\mathcal P(\T^d))\leq K{\rm diam}(\mathcal P(\T^d)),
	\end{align}
	which proves the claim.
\end{proof}
\begin{teo} The function $\bar{\mathcal V}_\delta$ uniformly converges to a corrector $\chi_0$, which is defined by
\begin{align}\label{teo.chi0.new}
\chi_0(m)=\sup_{\chi\in{\mathcal S}^-}\chi(m).
\end{align}
\end{teo}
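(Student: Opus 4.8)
The strategy is to establish the two inequalities $\bar\chi \le \chi_0$ and $\chi_0 \le \bar\chi$ after first showing that the full family $\bar{\mathcal V}_\delta$ (not just a subsequence) converges. First I would combine the ingredients assembled in Sections~\ref{subsec.meas.ind}--\ref{subsec.smooth.mather}. From Lemma~\ref{bound.barV} we already know that any subsequential uniform limit $\chi_0$ of $\bar{\mathcal V}_\delta$ is a corrector function, hence in particular a subsolution of \eqref{critical.HJ}. To see that $\chi_0 \in \mathcal S^-$, fix $(\mu,p_1)\in\mathcal M_{\mathcal V}$; by definition it is a smooth Mather measure, so Proposition~\ref{Vdelta.in.Mdelta} gives $\int_{\mathcal P(\T^d)}\bar{\mathcal V}_\delta(m)\mu(dm)\le 0$ for every $\delta>0$, and passing to the limit along the chosen subsequence yields $\int_{\mathcal P(\T^d)}\chi_0(m)\mu(dm)\le 0$. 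Thus every subsequential limit $\chi_0$ belongs to $\mathcal S^-$, so $\chi_0 \le \bar\chi$ pointwise.

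For the reverse inequality, fix any $\chi\in\mathcal S^-$ and let $m_0\in\mathcal P(\T^d)$ be arbitrary. The plan is to use Proposition~\ref{punto.3}: for the optimal trajectory $(m_\delta,\alpha_\delta)$ of $\bar{\mathcal V}_\delta(m_0)$ and the induced measure $\nu_\delta^{m_0}$,
\[
\bar{\mathcal V}_\delta(m_0) \ge \chi(m_0) - \int_{\mathcal P(\T^d)}\chi(m)\,\nu_\delta^{m_0}(dm).
\]
Here I would need $m_0$ to have a smooth density, so that Lemma~\ref{lemma.nu.Mdelta.lim} applies: along a suitable subsequence $\delta_n\to 0$, $\nu_{\delta_n}^{m_0}\to\nu^{m_0}\in\mathcal M_{\mathcal V}$. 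Since $\chi\in\mathcal S^-$, by definition $\int_{\mathcal P(\T^d)}\chi(m)\,\mu(dm)\le 0$ where $\mu$ is the first marginal of $\nu^{m_0}$; passing to the limit in the displayed inequality along $\delta_n$ gives, for any subsequential limit $\chi_0$ of $\bar{\mathcal V}_{\delta_n}$,
\[
\chi_0(m_0) \ge \chi(m_0) - \int_{\mathcal P(\T^d)}\chi(m)\,\mu(dm) \ge \chi(m_0).
\]
Taking the supremum over $\chi\in\mathcal S^-$ yields $\chi_0(m_0)\ge\bar\chi(m_0)$ for every $m_0$ with smooth density; since both $\chi_0$ and $\bar\chi$ are Lipschitz continuous (uniformly, by the remarks in Lemma~\ref{bound.barV} and the preceding lemma) and smooth densities are dense in $\mathcal P(\T^d)$ for $\mathbf d$, this extends to all $m_0\in\mathcal P(\T^d)$. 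Combined with the first part, every subsequential limit equals $\bar\chi$, hence is uniquely determined; a standard compactness-plus-uniqueness argument then upgrades subsequential convergence of $\bar{\mathcal V}_\delta$ to full convergence, and $\chi_0 := \bar\chi$ is the limit.

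The main obstacle I anticipate is the careful handling of the restriction to $m_0$ with smooth density: Proposition~\ref{punto.3} holds for all $m_0$, but the identification of the limit of $\nu_\delta^{m_0}$ as an element of $\mathcal M_{\mathcal V}$ (rather than merely a Mather measure) crucially requires smoothness of $m_0$ via Lemma~\ref{lemma.nu.Mdelta.lim}. One must therefore argue the lower bound $\chi_0 \ge \bar\chi$ only on this dense set and then propagate it by uniform Lipschitz continuity --- this is where the compactness of $\mathcal P(\T^d)$ and density of smooth measures are essential. A secondary point requiring care is the justification that passing to the limit $\delta_n\to 0$ commutes with the integration against $\mu$ in $\int\chi(m)\nu_{\delta_n}^{m_0}(dm)\to\int\chi(m)\mu(dm)$, which follows from weak convergence $\nu_{\delta_n}^{m_0}\to\nu^{m_0}$ together with continuity of $\chi$ on the compact space $\mathcal P(\T^d)$.
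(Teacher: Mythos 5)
Your proposal is correct and follows essentially the same route as the paper: upper bound via Proposition \ref{Vdelta.in.Mdelta} showing every subsequential limit lies in $\mathcal S^-$, lower bound via Proposition \ref{punto.3} combined with Lemma \ref{lemma.nu.Mdelta.lim} at smooth initial measures, and then a density/continuity argument (the paper approximates $m_0$ by smooth $m_\varepsilon$ before passing to the limit, you extend $\chi_0\ge\bar\chi$ from the dense set afterwards --- the same argument in a different order), concluding with uniqueness of the limit to get full convergence.
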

\begin{proof}
	We fix a subsequence $\delta_n\rightarrow 0$ such that $\bar{\mathcal V}_{\delta_n}$ uniformly converges to a corrector $\chi_0$. The existence of such a subsequence was proven in Lemma \ref{bound.barV}. We know from Proposition \ref{Vdelta.in.Mdelta} that, for any $\delta_n>0$ and any smooth Mather measure $(\mu,p_1)$,
	\begin{align}\label{}
	\int_{{\mathcal P}(\T^d)}\bar{\mathcal V}_{\delta_n}(m)\mu(dm)\leq0.
	\end{align}
	In particular the above relation holds true for any $(\mu,p_1)\in\mathcal M_{\mathcal V}$. Letting $\delta_n\rightarrow 0$, we get
		\begin{align}\label{}
			\int_{{\mathcal P}(\T^d)}\chi_0(m)\mu(dm)\leq 0,\qquad\forall (\mu,p_1)\in\mathcal M_{\mathcal V},
		\end{align}
	which proves that $\chi_0\in{\mathcal S}^-$ and, consequently, that $\chi_0\leq\bar\chi$.
	
	For the other inequality we fix $m_0\in\mathcal P(\T^d)$ and a sequence $m_\varepsilon$ of smooth measures such that $m_\varepsilon\rightarrow m_0$ when $\varepsilon\rightarrow 0$. From Lemma \ref{punto.3} we know that, for any subsolution $\chi$ of \eqref{critical.HJ}, we have
	\begin{align}\label{ser.teo.fin.ine}
	\bar{\mathcal{V}}_{\delta_n}(m_\varepsilon)\geq\chi(m_\varepsilon)-\int_{\mathcal P(\T^d)}\chi(m)\nu_{\delta_n}^{m_\varepsilon}(dm).
	\end{align}
	As $m_\varepsilon$ is smooth, we know from Lemma \ref{lemma.nu.Mdelta.lim} that there exists a further subsequence $\delta_{n_k}$ such that $\nu_{\delta_{n_k}}^{m_\varepsilon}\rightarrow \nu^{m_\varepsilon}$ and $\nu^{m_\varepsilon}\in\mathcal M_{\mathcal V}$. Therefore, if we let in \eqref{ser.teo.fin.ine} ${\delta_{n_k}}\rightarrow 0$ , we get
	\begin{align}\label{}
	\chi_0(m_\varepsilon)\geq\chi(m_\varepsilon)-\int_{\mathcal P(\T^d)}\chi(m)\nu^{m_\varepsilon}(dm).
	\end{align}
	If now we suppose that $\chi\in{\mathcal S}^-$, the above inequality becomes
	\begin{align}\label{}
	\chi_0(m_\varepsilon)\geq\chi(m_\varepsilon).
	\end{align}
	As $\chi$ and $\chi_0$ are continuous we can let $\varepsilon\rightarrow 0$ to finally find that 
	\begin{align}\label{}
	\chi_0(m_0)\geq\chi(m_0).
	\end{align}
	By the arbitrariness of $\chi$ and $m_0$ we deduce that
	$$
	\chi_0\geq\sup_{\chi\in{\mathcal S}^-}\chi=\bar\chi.
	$$
	
	Note that $\bar\chi$ is uniquely defined and depends neither on $\delta_n$ nor on $\delta_{n_k}$. This implies that also $\chi_0$ is uniquely defined and, therefore, the full convergence of $\bar{\mathcal{V_\delta}}$.
\end{proof}

\bigskip
	\bibliography{KAMbib}
	\bibliographystyle{amsplain}
	
\end{document}